\let\g=\gamma
\newcommand{\N}{\mathbb{N}}                   
\newcommand{\R}{\mathbb{R}}                   
\newcommand{\C}{\mathbb{C}}                   
\newcommand{\K}{\mathcal{K}}                  
\newcommand{\OO}{\mathcal{O}}                 
\newcommand{\VV}{\mathcal{V}}                 
\newcommand{\GG}{\mathcal{G}}                 
\newcommand{\Ad}{\mathcal A^d}                
\newcommand{\mm}{\mathfrak{m}}                
\newcommand{\vr}{\varrho}                     
\newcommand{\ve}{\varepsilon}                 
\newcommand{\kp}{\kappa}                      
\newcommand{\lb}{\lambda}                     
\newcommand{\Hp}{\mathrm{Hol}_p}              
\newcommand{\reg}{\mathrm{reg}}               
\newcommand{\D}{\Delta}
\newtheorem{theorem}{Theorem}[section]
\newtheorem{proposition}[theorem]{Proposition}
\newtheorem{lemma}[theorem]{Lemma}
\newtheorem{corollary}[theorem]{Corollary}
\theoremstyle{definition}
\newtheorem{definition}[theorem]{Definition}
\newtheorem{remark}[theorem]{Remark}
\numberwithin{equation}{section}
\begin{document}
\title[Tameness of complex dimension in a real analytic set]{Tameness of complex dimension\\ in a real analytic set}

\author{Janusz Adamus}
\address{J. Adamus, Department of Mathematics, The University of Western Ontario, London, Ontario N6A 5B7 Canada
         -- and -- Institute of Mathematics, Faculty of Mathematics and Computer Science,
         Jagiellonian University, ul. {\L}ojasiewicza 6, 30-348 Krak{\'o}w, Poland}
\email{jadamus@uwo.ca}
\author{Serge Randriambololona}
\address{S. Randriambololona, Department of Mathematics, The University of Western Ontario, London, Ontario N6A 5B7 Canada}
\email{serge.randriambololona@ens-lyon.org}
\author{Rasul Shafikov}
\address{R. Shafikov, Department of Mathematics, The University of Western Ontario, London, Ontario N6A 5B7 Canada}
\email{shafikov@uwo.ca}
\thanks{J.Adamus and R.Shafikov were partially supported by Natural Sciences and Engineering Research Council of Canada discovery grants.}
\subjclass[2000]{32B10, 32B20, 32C07, 32C25, 32V15, 32V40, 14P15}

\begin{abstract}
Given a real analytic set $X$ in a complex manifold and a positive  
integer $d$, denote by $\mathcal A^d$ the set of points $p$ in $X$ at which 
there exists a germ of a complex analytic set of dimension $d$ contained in $X$. 
It is proved that $\mathcal A^d$ is a closed semianalytic subset of $X$.
\end{abstract}

\maketitle

\section{Introduction and Main Results}
\label{sec:intro}

Existence or non-existence of complex analytic germs in a given real hypersurface $X$ of a complex manifold
plays an important role in the theory of holomorphic mappings. A particularly interesting case is when $X$ 
is real analytic. For example, in \cite{df} Diederich and Forn\ae ss showed that a compact real analytic set 
$X$ in $\mathbb C^n$ does not contain germs of complex analytic sets of positive dimension. If $X$ is not 
compact, then the set $\mathcal A^1$ of points $p$ in $X$ such that there exists a positive-dimensional complex 
analytic germ $Y_p$ with $Y_p \subset X_p$ is non-empty in general. It is a natural problem to describe the 
structure of the set $\mathcal A^1$. D'Angelo \cite{da2}, and Diederich and Mazzilli \cite{dm} using different 
methods proved that $\mathcal A^1$ is closed in $X$. In \cite{dm} the authors also asked whether 
$\mathcal A^1$ is a real analytic subset of $X$. Our main theorem answers this question.

\begin{theorem}
\label{t:1}
Let $X$ be a closed real analytic subset of an open set in $\mathbb C^n$. Let $\Ad$ denote the set 
of points $p$ in $X$ such that $X_p$, the germ of the set $X$ at $p$, contains a complex analytic germ of 
dimension $d$.  Then $\Ad$ is a closed semianalytic subset of $X$, for every $d\in\N$. Moreover, if $X$ is 
real algebraic, then $\Ad$ is semialgebraic in $X$.
\end{theorem}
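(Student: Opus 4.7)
The statement is local, so fix $p_0\in X$ and work in a neighbourhood $U$ on which $X$ is cut out by real-analytic functions $\rho_1,\dots,\rho_s$. Writing $\rho_j(z)=\rho_j^{\C}(z,\bar z)$ with $\rho_j^{\C}$ holomorphic in two sets of $n$ variables, a $d$-dimensional complex analytic germ in $X$ at a point $p\in U$ may, up to branched parametrization, be written as the image of a holomorphic germ $\varphi\colon(\C^d,0)\to(\C^n,p)$ of generic rank $d$ satisfying
\[
\rho_j^{\C}\!\bigl(\varphi(t),\overline{\varphi(t)}\bigr)\equiv 0 \qquad(j=1,\dots,s).
\]
Thus $p\in\Ad$ precisely when such a $\varphi$ exists at $p$.

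The plan is to encode this existence problem on jet spaces. Expanding the displayed identity in the Taylor series of $\varphi$ at $0$ and killing coefficients up to order $k$ produces finitely many real polynomial equations on the $k$-jet $j^k_0\varphi\in J^k_0(\C^d,\C^n)$ whose coefficients are real-analytic functions of $p$, namely evaluations at $(p,\bar p)$ of mixed partial derivatives of the $\rho_j^{\C}$. Combined with the closed semialgebraic condition $\operatorname{rank}(d_0\varphi)\ge d$, these cut out a set $E_k\subset U\times J^k_0(\C^d,\C^n)$ whose fibre over each $p$ is semialgebraic in the jet coordinates. A relative Tarski--Seidenberg argument, eliminating the jet variables for each fixed $p$ and reassembling in $p$, then shows that the projection $\Pi_k(E_k)\subset U$ is semianalytic, and semialgebraic if $X$ is real algebraic.

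By construction, $\Ad\cap U=\bigcap_{k\ge 1}\Pi_k(E_k)$. The decisive step, and the main obstacle, is to produce a uniform index $k_0=k_0(p_0)$ at which this descending chain stabilizes: for every $p\in U$, every $k_0$-jet in the fibre $E_{k_0}(p)$ must extend to a formal solution of the full infinite system. This is a parametric Chevalley-type theorem for the family of analytic systems indexed by $p$, and is the technical heart of the argument. Once such a bound is secured, the Artin approximation theorem upgrades any formal rank-$d$ solution to a convergent holomorphic one, yielding a genuine complex analytic germ of dimension $d$ at $p$ contained in $X$.

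With the uniform bound in hand, $\Ad\cap U=\Pi_{k_0}(E_{k_0})$ is semianalytic (semialgebraic in the real algebraic case). Closedness of $\Ad$ in $X$ is then obtained by noting that, after normalising the parametrization $\varphi$ (e.g.\ Weierstrass-type preparation so that a fixed set of leading jet coefficients controls the germ), the relevant portion of $E_{k_0}$ may be taken proper over $U$, so limits of rank-$d$ solutions remain rank-$d$ solutions; this generalises the $d=1$ result of Diederich--Mazzilli to arbitrary $d$. The hard part is thus the uniform Chevalley bound; everything else is bookkeeping around relative Tarski--Seidenberg and Artin approximation.
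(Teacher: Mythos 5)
The decisive gap is the one you flag yourself: the uniform stabilization index $k_0$ (your ``parametric Chevalley bound''). Everything rests on it --- without it you only get $\Ad\cap U=\bigcap_{k\ge1}\Pi_k(E_k)$, an infinite intersection of semianalytic sets, which need not be semianalytic, and you have no identity $\Ad\cap U=\Pi_{k_0}(E_{k_0})$ to use. You give no argument, nor even a plausible source, for such a bound uniform in the base point $p$, and this is exactly the hard content of the theorem. The paper's route to uniformity is entirely different and never passes through jets of parametrizations: it characterizes $p\in\Ad$ by the presence, in every ball $\mathbb B(p,\ve)$, of a finite $\kp$-grid of points lying pairwise on one another's Segre varieties (Theorem~\ref{t:2}), and the uniform integer $\kp$ comes from a bound on the number of irreducible components in analytic families (Lemma~\ref{lem:fin1}, via subanalyticity and bounded numbers of connected components of fibres) combined with a Noetherian descending-chain argument, in the lexicographic order on $\N^{n+1}$, for the intersections $S_{a_1}\cap\dots\cap S_{a_L}$ (Proposition~\ref{prop:fin2}, Corollary~\ref{rem:fin3}). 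Semianalyticity then follows because the grid condition involves finitely many auxiliary points satisfying analytic equations, and one only projects a semianalytic subset of a diagonal. Your plan has no substitute for this finiteness mechanism, so as written it reduces the theorem to an unproved statement of comparable depth.

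There are further gaps that would need real work even granting the bound. First, the jet encoding is not right: $\operatorname{rank}(d_0\varphi)\ge d$ is an open, not closed, condition, and it is also the wrong condition --- an irreducible $d$-dimensional germ singular at $p$ (e.g.\ the cone parametrized by $(s,t)\mapsto(s^2,t^2,st)$) admits no parametrization whose differential has rank $d$ at the origin, so $\Ad\subset\Pi_k(E_k)$ fails for your $E_k$; what is needed is generic rank $d$, which is not determined by any fixed jet, and in the converse direction the image of a generic-rank-$d$ map germ need not be an analytic germ, so the equivalence ``$p\in\Ad$ iff such a $\varphi$ exists'' itself requires proof in both directions (one direction via a resolution-type parametrization, the other via closedness of $\Ad$). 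Second, closedness is not bookkeeping: your properness claim for $E_{k_0}$ is asserted, not proved --- jets of parametrizations at nearby points can degenerate in the limit, and closedness is a substantive result already for $d=1$ (D'Angelo, Diederich--Mazzilli); the paper proves it (Proposition~\ref{prop:closed}) by replacing each germ $Y_j$ by its Segre saturation $Y_j^2$, an analytic subset of a fixed neighbourhood contained in $X$, and then controlling Hausdorff limits. Third, the system $\rho_j^{\C}\bigl(\varphi(t),\overline{\varphi(t)}\bigr)\equiv0$ is not holomorphic in the unknown $\varphi$, so Artin approximation does not apply verbatim; one must first complexify (or invoke a real-analytic version), and preserving generic rank under approximation requires a further order bound depending a priori on the formal solution, hence again on $p$.
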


The proof of closedness of $\Ad$, given in Proposition~\ref{prop:closed}, is similar in the
spirit to \cite{dm} (where it is done for $\mathcal A^1$), but we do not use volume estimates or Bishop's theorem. 
Instead, our proof purely relies on properties of Segre varieties. The following example, which is due 
to Meylan, Mir, and Zaitsev  \cite{mmz}, shows that the set $\Ad$ is not in general real analytic. 
Consider 
$$
X=\left\{(z_1,\dots, z_4)\in \mathbb C^4 : x_1^2 - x_2^2+x_3^2 = x_4^3\right\},
$$
where $ z_j = x_j+i y_j, \ j=1,\dots,4$. Near $(1,1,0,0)$ the set $X$ is a smooth real algebraic manifold. For
every point $z$ in $X$ with $x_4\ge 0$ there is a complex line passing through $z$ and contained in $X$. But if $x_4<0$, 
then $X$ can be expressed as a graph of a strictly convex function, and therefore there 
cannot be any germs of positive-dimensional complex analytic sets. Thus $\mathcal A^1$ coincides with $X \cap \{x_4\ge 0\}$, 
which is semianalytic (even semialgebraic) but not analytic. 

\begin{remark}
\label{rem:as}
Another (in a sense, dual) question that can be asked about a germ $X_p$ of a real analytic set, is what is the
smallest dimension of a complex analytic germ at $p$ \emph{containing} $X_p$, and what can be said about 
the structure of the subset of $X$ along which this minimal dimension is realized. It is shown in 
\cite[Thm.\,1.5]{as} that for an irreducible real analytic subset $X$ of $\C^n$ of pure dimension $d>0$ this 
so-called \emph{holomorphic closure dimension} attains its minimum $h$ outside a closed semianalytic subset 
$S\subset X$ of dimension less than $d$. In fact, $X\setminus S$ is a CR manifold of CR dimension $d-h$.
Interestingly, $X$ does not in general admit semianalytic (not even subanalytic, see \cite[Ex.\,6.3]{as}) 
stratification by holomorphic closure dimension beyond $S$. (See also \cite{ar} for the semialgebraic context.)
By comparison, Theorem~\ref{t:1} implies a semianalytic filtration of $X$, 
$X=\mathcal A^0\supset\mathcal A^1\supset\dots\supset\mathcal A^{n-1}$.
\end{remark}

Seminanalyticity will be a consequence of the description of the set $\mathcal A^d$ given in Theorem~\ref{t:2} 
below. We first need to introduce some notation. Let $\vr(z,\overline z)$ be a real analytic function on some open
polydisc $V\Subset \mathbb C^n$ given by a power series convergent in a neighbourhood of $\overline V$ such that 
\begin{equation}\label{e:df}
X\cap V = \{z\in V : \vr(z,\overline z)=0\}\,.
\end{equation}
As in the smooth case (see, e.g.,~\cite{w}), for a point $w\in V$, we define the \emph{Segre variety} of $w$ as
\begin{equation}\label{e:sv}
S_w = \{ z\in V : \vr(z,\overline w)=0\}\,.
\end{equation}
For more about Segre varieties see Section~\ref{s:2}. Geometric properties of
these varieties will play a crucial role in the proof of Theorem~\ref{t:2}.

Let $\kp$ be a positive integer, and let $n\geq1$ be the complex dimension of the ambient space of $X$
with variables $z=(z_1,\dots,z_n)$.
For $1\leq d\leq n$, let
\[
\Lambda(d,n):=\{\lb=(\lb_1,\dots,\lb_d)\in\N^d:1\leq\lb_1<\dots<\lb_d\leq n\}\,.
\]
Given $\lb=(\lb_1,\dots,\lb_d)\in\Lambda(d,n)$, we will denote by $z_{\lb}$ the sub-collection of variables $(z_{\lb_1},\dots,z_{\lb_d})$.

\begin{definition}
\label{def:grid}
For any $1\leq d\leq n$, and $\lb\in\Lambda(d,n)$, we define a \emph{$\kp$-grid with $d$-dimensional base $z_{\lb}$}, denoted $\GG^{\kp}_{\lb}$, as follows. Let $\GG^{\kp}_{\lb}$ be a collection of $(\kp+1)^d$ distinct points $p_\nu \in V$, where $\nu=(\nu_1,\dots,\nu_d)\in \{1,\dots,\kp+1\}^d$, such that
\begin{itemize}
\item[(a)] for each pair $(p_{\nu},p_{\nu'})$ of elements of $\GG^{\kp}_{\lb}$, we have $\varrho (p_{\nu},\overline{p_{\nu'}})=0$, and
\item[(b)] for $p_\nu$ and $p_{\nu'}$ in $\GG^{\kp}_{\lb}$, we have $\nu_j = \nu'_j$ if and only if
$p_\nu$ and $p_{\nu'}$ have the same $\lb_j$-th coordinate (as vectors in $\mathbb C^n$).
\end{itemize}
\end{definition}

We denote by  $\mathbb B(p,\ve)$ the standard open Euclidean ball of radius $\ve$ centred at $p$.

\begin{theorem}
\label{t:2}
Let $X$ be a closed real analytic subset of an open set in $\mathbb C^n$, and let $V$ and $\vr$ be such 
that \eqref{e:df} holds. Let $1\leq d< n$, and let $\Ad$ be the set of points $p$ in $X$ such that $X_p$ 
contains a complex analytic germ of dimension $d$. 
Then there exists a positive integer $\kappa$ such that the following two statements are equivalent: 
\begin{enumerate}[{\rm (}i\,{\rm )}]
\item $p\in \Ad\cap V$, 
\item For any $\ve>0$, there exists a $\kp$-grid $\GG^{\kp}_{\lb}$ with a $d$-dimensional base $z_{\lb}$ for some $\lb\in\Lambda(d,n)$ such that 
$\GG^{\kp}_{\lb}\subset \mathbb B(p,\ve)$.
\end{enumerate}
\end{theorem}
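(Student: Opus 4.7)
The plan is to prove the two directions separately. Direction (i)$\Rightarrow$(ii) is formal and works for every positive integer $\kp$, while (ii)$\Rightarrow$(i) dictates the choice of $\kp$.

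For (i)$\Rightarrow$(ii), I use the classical Segre-containment principle: every complex analytic germ $Y_p\subset X_p$ is contained in the Segre variety $S_q$ of each $q\in Y$ near $p$. Indeed, after complexifying $\vr(z,\bar z)$ to a function $\tilde\vr(z,w)$ holomorphic in $(z,w)\in\C^{2n}$ and parameterizing $Y$ locally by a holomorphic map $t\mapsto z(t)$ with $z(0)=q$, the function $\tilde\vr(z(t),\overline{z(s)})$ is holomorphic in the $2d$ variables $(t,\bar s)$ and vanishes on the maximally totally real locus $s=t$, hence vanishes identically. Given $Y_p$ of dimension $d$, I pick $\lb\in\Lambda(d,n)$ so that the projection $z\mapsto z_\lb$ is finite and open on $Y$ near $p$, then choose $\kp+1$ distinct values of each coordinate $z_{\lb_j}$ close to $p_{\lb_j}$ and lift the $(\kp+1)^d$ product tuples via a chosen holomorphic branch of the projection. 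The resulting $p_\nu$ lie in $Y\cap\mathbb B(p,\ve)\subset X$, satisfy (b) by construction and (a) by Segre-containment applied pairwise.

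For (ii)$\Rightarrow$(i), I take $\kp$ to be a uniform multiplicity bound for the Segre family: since $\tilde\vr$ is holomorphic on a neighbourhood of $\overline V\times\overline V$, compactness together with Weierstrass preparation produces an integer $\kp$ bounding the multiplicity at any $z_0\in\overline V$ of the restriction of $\tilde\vr(\cdot,\overline{w_0})$ to any coordinate line through $z_0$, provided that restriction is not identically zero. The core algebraic step is then: an irreducible complex analytic germ $(A,q)\subset V$ of dimension strictly less than $d$ cannot contain a $\kp$-grid with $d$-dimensional base inside arbitrarily small neighbourhoods of $q$. The proof is a Weierstrass-preparation argument applied to a holomorphic function locally defining a hypersurface through $\pi_\lb(A)$: $\kp+1$ distinct values in any single $\lb_j$-direction, with the remaining base coordinates held fixed, would force that function to vanish identically in the $\lb_j$-variable, violating $\dim A<d$.

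Given (ii), I pick $\kp$-grids $\GG_k\subset\mathbb B(p,1/k)$ with a common base $\lb$ after extracting a subsequence (possible since $\Lambda(d,n)$ is finite). Applying the algebraic claim to a suitable component of $S_{p_{\nu_0}^{(k)}}$ through a distinguished grid vertex $p_{\nu_0}^{(k)}$ yields a complex analytic set $Z_k\ni p_{\nu_0}^{(k)}$ of dimension at least $d$ projecting surjectively onto the $z_\lb$-plane. By Bishop's compactness theorem for analytic sets of locally bounded volume, a subsequence of the $Z_k$ converges in the Hausdorff sense to a complex analytic germ $Z_p$ at $p$ of dimension $\geq d$. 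The step I anticipate as the main obstacle is verifying $Z_p\subset X$: Segre varieties are not individually contained in $X$, so I would argue that each $w\in Z_p$ is a limit of grid vertices and that iterating the grid-extraction argument in shrinking balls centred on such vertices gives $Z_p\subset S_w$ for every $w\in Z_p$. The tautology $w\in S_w\iff w\in X$ then forces $Z_p\subset X$, and restriction to a pure $d$-dimensional component of $Z_p$ supplies the required complex analytic germ $Y_p\subset X_p$, so that $p\in\Ad\cap V$.
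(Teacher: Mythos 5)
Your (i)\,$\Rightarrow$\,(ii) direction is essentially the paper's argument (Segre containment via Lemma~\ref{lem:dm1}/Lemma~\ref{lem:dm2}(2) gives condition (a), and lifting a product grid through a graph of $Y$ over the $z_{\lb}$-coordinates gives (b)), and, as you note, it works for every $\kp$. The substance of the theorem is (ii)\,$\Rightarrow$\,(i), and there your proposal has two genuine gaps.

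First, the constant you choose is not shown to be the right one. Your $\kp$ bounds the vanishing order of $\tilde\vr(\cdot,\overline{w_0})$ along coordinate lines, but your ``core algebraic step'' is applied to a holomorphic function $h$ cutting out a hypersurface through $\pi_{\lb}(A)$, where $A$ is a component of a (varying) Segre variety or of an intersection of Segre varieties; the number of zeros of $h$ in a fixed coordinate direction is governed by the multiplicity of $h$, which nothing in your argument bounds by $\kp$. Without such a bound the step is simply false: the one-dimensional set $\{(z_1-z_2)(z_1+z_2)=0\}\times\{0\}\subset\C^3$ contains the grids $\{(\pm t,\pm t,0)\}$ with two-dimensional base for all small $t$, so a germ of dimension $<d$ can contain $\kp$-grids shrinking to a point whenever $\kp$ is smaller than the relevant degree. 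What the paper proves instead is a uniform finiteness statement tailored to this use: Corollary~\ref{rem:fin3} bounds, independently of the subset $Z\subset U_1$, the number of irreducible components of any fibre of $\pi_{\lb}$ restricted to $\bigcap_{z\in Z}S_z\cap U_2$, and this rests on subanalyticity of the Segre family (Lemma~\ref{lem:fin1}) and a lexicographic Noetherianity argument (Proposition~\ref{prop:fin2}); your multiplicity bound does not obviously yield anything of this kind, precisely because the sets one must control are intersections of arbitrarily many Segre varieties.

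Second, and more fundamentally, your construction never places a $d$-dimensional set inside $X$. Taking $Z_k$ to be a component of the single Segre variety $S_{p^{(k)}_{\nu_0}}$ of dimension $\geq d$ is vacuous ($S_w$ already has dimension $n-1\geq d$) and unhelpful, since a single Segre variety is not contained in $X$; and the repair you anticipate --- that every $w$ in the Bishop limit $Z_p$ is a limit of grid vertices, whence $Z_p\subset S_w$ and then $Z_p\subset X$ by $w\in S_w\Leftrightarrow w\in X$ --- cannot work, because your grids $\GG_k$ lie in $\mathbb B(p,1/k)$, so the only possible limit of grid vertices is $p$ itself. The paper's resolution of exactly this difficulty is the double intersection $Y^1=\bigcap_{z\in\GG^{\kp}_{\lb}}S_z$, $Y^2=\bigcap_{w\in Y^1\cap U_1}S_w$: by Lemma~\ref{lem:dm2}, $\GG^{\kp}_{\lb}\subset Y^2$ and $Y^2\cap U_1\subset X$, and then a descending induction with the pigeonhole principle, using the uniform component bound $\kp$ on the fibres of $\pi_{\lb^{(\delta)}}|_{Y^2}$, forces a component of dimension $\geq d$ through some grid vertex; this exhibits a point of $\Ad$ in $\mathbb B(p,\ve)$, and one concludes $p\in\Ad$ from the closedness of $\Ad$ (Proposition~\ref{prop:closed}) rather than by constructing a germ at $p$ directly. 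These ingredients --- the second Segre intersection landing in $X$, the uniform fibre-component bound, the pigeonhole induction, and the appeal to closedness --- are the core of the implication and are missing from, or incorrectly replaced in, your proposal.
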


In general, the number $\kappa$ in Theorem~\ref{t:2} depends on the defining function $\vr$. However, if $X$ 
is a smooth real analytic hypersurface, then Segre varieties do not depend on the choice of $\vr$ provided that 
the differential of $\vr$ does not vanish on $X$, and in fact, $S_w$ are local biholomorphic invariants of $X$. Thus, in this 
case $\kappa$ is also a local biholomorphic invariant of $X$ (cf. Section~\ref{sec:sub}).
\smallskip

Another question raised in \cite{dm} is whether the set of points on $X$ of infinite D'Angelo type is exactly 
$\mathcal A^1$. The proof of this fact is given in D'Angelo  \cite[Sec.\,3.3.3, Thm.\,4]{da2}, however, in \cite{dm} validity of this proof is questioned. We address this issue in the last section. Our goal is to clarify the definition of 
type for real analytic sets, and to give a concise but self-contained proof of the fact that the subset of $X$ of 
points of infinite type indeed coincides with the set $\mathcal A^1$. Combining this with Theorem~\ref{t:1} immediately gives the following result.

\begin{corollary}
Given a real analytic set $X$, the set of points of D'Angelo infinite type is a closed semianalytic subset of $X$.
\end{corollary}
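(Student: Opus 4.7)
The plan is simply to combine Theorem~\ref{t:1} with the identification of the set of points of D'Angelo infinite type as $\mathcal A^1$. Once that identification is in hand, the corollary follows immediately by specializing Theorem~\ref{t:1} to $d=1$, which asserts that $\mathcal A^1$ is a closed semianalytic subset of $X$ (and semialgebraic if $X$ is real algebraic).

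The only non-trivial ingredient is therefore the equality between the D'Angelo infinite-type locus and $\mathcal A^1$. This is what the paper's last section is devoted to, and I would treat it as a black box here: once one adopts the correct definition of D'Angelo type for a real analytic set, a point $p\in X$ has infinite type precisely when there exists a non-constant holomorphic curve germ $\varphi\colon(\C,0)\to(\C^n,p)$ with image contained in $X$, which is equivalent to the existence of a one-dimensional complex analytic germ $Y_p\subset X_p$; that is, to $p\in\mathcal A^1$. So the forward implication uses that any such $\varphi$ has infinite order of contact with every local defining function of $X$, and the reverse uses that if $\varphi$ has infinite order of contact with every such defining function then, because $X$ is real analytic, the image of $\varphi$ actually lies in $X$, yielding a complex analytic germ through $p$.

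Granting this equivalence, the argument is a one-liner: the set of points of D'Angelo infinite type equals $\mathcal A^1$, and Theorem~\ref{t:1} (with $d=1$) asserts that $\mathcal A^1$ is closed and semianalytic in $X$ (semialgebraic if $X$ is real algebraic). No further work is needed.

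The main obstacle is really not in the corollary itself but in making the definition of D'Angelo type unambiguous for real analytic sets that are not hypersurfaces or not smooth, and in giving a clean proof of the equivalence described above. That is precisely the issue the authors flag as being addressed in the last section, so for the corollary as stated one need only cite that section and Theorem~\ref{t:1}.
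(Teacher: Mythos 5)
Your proposal is correct and takes essentially the same route as the paper: the D'Angelo infinite-type locus is identified with $\mathcal A^1$ (this is Proposition~\ref{prop:inf-type}), and the corollary then follows at once from Theorem~\ref{t:1} with $d=1$. One caveat on your black-boxed aside: infinite type means the supremum of normalized contact orders $\nu(\vr\circ\gamma)/\nu(\gamma)$ is infinite, not that a single curve with infinite order of contact exists, so the reverse implication is not the soft statement you sketch but precisely the substantive content of Proposition~\ref{prop:inf-type} (holomorphic decomposition and a limit of unitary operators); since you defer to that section anyway, this does not affect the validity of your proof of the corollary.
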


\section{Segre Varieties}
\label{s:2}

Given a closed real analytic set $X$ in an open set in $\mathbb C^n$ of arbitrary positive dimension, 
for any point $p\in X$ there exists a neighbourhood $V\subset \mathbb C^n$ of $p$ such that $X\cap V$ is precisely 
the zero set of a convergent power series 
\[
\vr(z,\bar{z})=\sum_{|\alpha|+|\beta|\ge1} c_{\alpha\beta}\,(z-p)^{\alpha} {(\overline{z-p})}^{\beta}\,,
\]
where, for a multi-index $\beta=(\beta_1,\dots,\beta_n)\in\N^n$, $w^\beta$ denotes the monomial $w_1^{\beta_1}\dots w_n^{\beta_n}$, and $|\beta|=\beta_1+\dots+\beta_n$. (Indeed, if $X$ is defined near $p$ by the vanishing of real analytic functions $h_1,\dots,h_t$, one can put $\vr=h_1^2+\dots+h_t^2$.)
For simplicity, assume that $p=0$. By shrinking $V$ if needed, we may further assume that the series $\vr(z,\overline w) = \sum c_{\alpha\beta} z^{\alpha} {\overline w}^{\beta}$ is also convergent in a neighbourhood of the closure of $V\times V$. For a given $w\in V$ define the \emph{Segre variety} $S_w$ of $w$ to be the complex analytic subset of $V$ defined by \eqref{e:sv}.

The set
\[
X^c=\{(z,\overline w)\in V\times V: \vr(z,\overline w)=0\}
\]
is a non-empty complex analytic set defined by a single holomorphic function, and hence it is of (pure) dimension $2n-1$. It follows that a fibre 
$\{z\in V:(z,\overline w)\in X^c\}$ over a point $\overline w$, if nonempty, has dimension $n-1$ or $n$. For every point $z\in X$, we have $\vr(z,\overline z)=0$, and 
hence $S_z$ is not empty. Therefore, by the analytic dependence of $S_w$ on $\overline w$, there exist polydisc neighbourhoods $U_1\Subset U_2\Subset V$ of $p$ such 
that for any $w\in U_1$, the set $S_w\cap U_2$ is a non-empty complex analytic subset of $U_2$ of (pure) dimension either $n-1$ or $n$. To simplify notation, 
we will write $S_w$ for $S_w\cap U_2$, whenever $w\in U_1$. From the definition \eqref{e:sv}, and the fact that  $\vr(z,\overline z)$ is real-valued, 
it follows that for $z,w\in U_1$,
\begin{equation}
\label{e:p1}
z\in S_w \Longleftrightarrow w\in S_z,
\end{equation}
\begin{equation}
\label{e:p2}
z\in S_z \Longleftrightarrow z\in X.
\end{equation}
Let $E$ be the set of points $z$ in $U_1$ such that $\dim S_z = n$; i.e., $S_z=U_2$. Then $z\in E$ implies 
$z\in S_z$, and therefore $E\subset X$. Furthermore, $E\ne X$ unless $X$ is itself complex analytic.

\begin{remark}
\label{rem:irreducible}
Apart from properties \eqref{e:p1} and \eqref{e:p2}, the results of the following sections rely on 
a few basic properties of complex analytic sets, 
which we list here for reader's convenience (for details, see \cite{c} or \cite{loj}). 
Let $Y$ denote a complex analytic subset of an open set in $\C^n$.

(1) The family of irreducible components of $Y$ is locally finite, and each irreducible component is precisely the set-theoretic closure in $Y$ 
of a connected component of the regular locus of $Y$.

(2) The set $Y$ is irreducible iff its regular locus $Y^{\reg}$ is a connected manifold. 
In this case, $Y$ is of pure dimension.
Moreover, a proper analytic subset of an irreducible set $Y$ is of dimension at most $\dim Y-1$.

(3) A point $z^0\in Y$ is regular (i.e., $z^0\in Y^{\reg}$) iff there are a natural number $d$, an open 
polydisc $U$ centered at $z_0$, and a sub-collection of variables $(z_{j_1},\dots,z_{j_d})$, such that the projection $\pi$ onto (the linear subspace of $\C^n$ spanned by) these variables
restricted to $Y\cap U$ is a bijection between $Y\cap U$ and $\pi(U)$.

(4) If $Y$ is irreducible, of dimension $k>0$, and $0\in Y$, then
after a (generic) linear change of coordinates in $\C^n$, there is a neighbourhood $\Omega\times\Sigma$ of $0$, 
where $\Omega=\{(z_1,\dots,z_k)\in\C^k:|z_j|<\delta\}$, 
$\Sigma=\{(z_{k+1},\dots,z_n)\in\C^{n-k}:|z_j|<\ve\}$ for some $\delta,\ve>0$, 
and a proper analytic subset $Z$ of $\Omega$, such that the restriction to $Y$, $\pi:Y\cap(\Omega\times\Sigma)\to\Omega$, 
of the canonical projection $\Omega\times\Sigma\to\Omega$ is proper, surjective, 
and locally biholomorphic at every $p$ in $(Y\cap(\Omega\times\Sigma))\setminus(Z\times\Sigma)$, which is an open dense subset 
of $Y\cap(\Omega\times\Sigma)$.

(5) If $\pi$ is a proper projection from $Y$ to a linear subspace of $\C^n$, then $\dim\pi(Y)=\dim Y$.
\end{remark}

By a \emph{holomorphic disc} through a point $p$ we mean an irreducible one-dimensional complex analytic set $Y$ 
in a neighbourhood $U$ of $p$, such that $p\in Y$ and $Y$ is the image of a non-constant holomorphic map $\g$ 
from a disc $\{\zeta\in\C:|\zeta|<\delta\}$ to $U$. We say that the disc is centred at $p$ when $\g(0)=p$.
The following result is essentially a restatement of \cite[Claim on p. 383]{df}. It generalizes \cite[Lem.\,2.5]{dm}, 
which states that a holomorphic disc $Y$ through a point $z$ is contained in $S_z$, provided $Y\subset X$.

\begin{lemma}
\label{lem:dm1}
Let $X,p,\vr,V,U_1$ and $U_2$ be as above. 
Suppose that $Y$ is an irreducible complex analytic subset of an open set in $U_2$, of positive dimension $k$, and such that $Y\subset X$.
Then $z\in Y$ implies $Y\subset S_z$.
\end{lemma}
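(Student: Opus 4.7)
Fix $z_0\in Y$; the task is to show $\varrho(w,\overline{z_0})=0$ for every $w\in Y$. The plan is to first treat the case when $z_0$ is a regular point of $Y$ by exploiting a local holomorphic parametrization, and then to deduce the general case via the symmetry~\eqref{e:p1}.

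Assume first $z_0\in Y^{\reg}$. By Remark~\ref{rem:irreducible}(3), a neighbourhood of $z_0$ in $Y$ is the image of a biholomorphic parametrization $\phi\colon U\to Y$, with $U\subset\C^k$ an open neighbourhood of $0$ and $\phi(0)=z_0$. Let $\bar\phi$ denote the holomorphic map obtained by conjugating the Taylor coefficients of $\phi$, so that $\bar\phi(\bar t)=\overline{\phi(t)}$. Because $\varrho(z,\bar w)$ is a power series in $z$ and $\bar w$ convergent on $V\times V$, the function
\[
H(t,r)\,:=\,\varrho\bigl(\phi(t),\bar\phi(r)\bigr)
\]
is holomorphic on $U\times U\subset\C^{2k}$ after shrinking $U$ if necessary. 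The inclusion $Y\subset X$ translates into the diagonal identity $H(t,\bar t)=\varrho(\phi(t),\overline{\phi(t)})=0$ for every $t\in U$.

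Expanding $H(t,r)=\sum_{\alpha,\beta}c_{\alpha\beta}\,t^\alpha r^\beta$ and substituting $r=\bar t$ gives $\sum_{\alpha,\beta}c_{\alpha\beta}\,t^\alpha\bar t^{\,\beta}\equiv 0$ on $U$; the real-analytic monomials $t^\alpha\bar t^{\,\beta}$ are linearly independent (one recovers $c_{\alpha\beta}$ by applying $\partial_t^\alpha\partial_{\bar t}^{\beta}$ at $0$), so every $c_{\alpha\beta}$ vanishes and $H\equiv 0$ on $U\times U$. Specializing to $r=0$ yields $\varrho(\phi(t),\overline{z_0})=0$ for all $t\in U$, so a nonempty open subset of $Y^{\reg}$ is contained in $S_{z_0}$. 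Since $Y$ is irreducible, $Y^{\reg}$ is a connected complex manifold by Remark~\ref{rem:irreducible}(2), hence the holomorphic function $z\mapsto\varrho(z,\overline{z_0})$ vanishes on all of $Y^{\reg}$ and, by continuity, on $Y=\overline{Y^{\reg}}$; that is, $Y\subset S_{z_0}$.

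It remains to handle $z_0\in Y^{\sng}$. For any $w_0\in Y^{\reg}$, the previous step (applied with $w_0$ in place of $z_0$) gives $z_0\in Y\subset S_{w_0}$, and then \eqref{e:p1} yields $w_0\in S_{z_0}$. Thus $Y^{\reg}\subset S_{z_0}$, and closedness of $S_{z_0}$ together with $Y=\overline{Y^{\reg}}$ gives $Y\subset S_{z_0}$. The main conceptual hurdle is the passage from the diagonal vanishing $H(t,\bar t)=0$ to the full vanishing $H\equiv 0$; this is where the complex-analytic (as opposed to merely real-analytic) structure of $Y$ enters, supplying a $k$-dimensional complex parameter that complexifies the otherwise totally real constraint imposed by $Y\subset X$.
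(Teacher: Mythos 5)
Your proof is correct, but it takes a genuinely different route from the paper's. The paper does not complexify directly: it fixes $z_0$, uses Remark~\ref{rem:irreducible}\,(4) to obtain a proper projection $\pi:Y\cap(\Omega\times\Sigma)\to\Omega$, slices $Y$ by preimages of complex lines through $\pi(z_0)$, extracts via the Puiseux theorem a holomorphic disc centred at $z_0$ passing through any prescribed point $z'$, and then quotes \cite[Lem.\,2.5]{dm} (a holomorphic disc through $z$ contained in $X$ lies in $S_z$), finally propagating the inclusion to all of $Y$ by irreducibility; in effect it reduces the $k$-dimensional statement to the one-dimensional one and treats an arbitrary, possibly singular, base point in one stroke. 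You instead prove the polarization identity directly in $k$ variables: at a regular point you complexify $\vr$ along a local holomorphic parametrization, observe that the diagonal vanishing $H(t,\bar t)\equiv 0$ forces all coefficients of $H$ to vanish, specialize $r=0$, and spread the conclusion over $Y^{\reg}$ by the identity theorem on a connected complex manifold; singular base points are then recovered from regular ones via the Segre symmetry and density of $Y^{\reg}$. Your argument is self-contained (no appeal to \cite{dm} or to Puiseux parametrizations) and quite transparent, at the cost of the regular/singular case split, whereas the paper's argument reuses the established disc lemma and needs no such split. One small point to make explicit: you invoke \eqref{e:p1} for points of $Y\subset U_2$ that need not lie in $U_1$, while the paper states \eqref{e:p1} only for $z,w\in U_1$; this is harmless, since the symmetry $z\in S_w\Leftrightarrow w\in S_z$ holds for all $z,w\in V$ because the real-valuedness of $\vr$ gives the Hermitian symmetry $\overline{\vr(z,\overline w)}=\vr(w,\overline z)$ of the complexification on $V\times V$, but a sentence justifying this extension would make the last step airtight.
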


\begin{proof}
Fix a point $z_0\in Y$. We shall show that $Y\subset S_{z_0}$. For simplicity of notation, assume $z_0=0$. 
By Remark~\ref{rem:irreducible}\,(4), we may choose a neighbourhood $\Omega\times\Sigma$ of $z_0$, 
such that $\Omega$ is a $k$-dimensional polydisc, and the projection $\pi:Y\cap(\Omega\times\Sigma)\to\Omega$ is proper 
and surjective. Let $z'=(z'_1,\dots,z'_k,z'_{k+1},\dots,z'_n)$ be an arbitrary point in $Y\cap(\Omega\times\Sigma)$, 
and let $L_{z'}\subset\Omega$ be the complex line segment through $(z'_1,\dots,z'_k)$ and $0$ in
$\Omega$. Then $Y_{z'}:=\pi^{-1}(L_{z'})$ is an analytic subset of $Y\cap(\Omega\times\Sigma)$, 
with a proper projection onto $L_{z'}$, and hence of dimension one, by Remark~\ref{rem:irreducible}\,(5). 
We may assume that $Y_{z'}$ is irreducible, by keeping only one irreducible component of $Y_{z'}$ passing through $z'$ 
and $z_0$. Then, by the Puiseux theorem (see, e.g., \cite[Ch.\,II, \S\,6.2]{loj}), 
there is a neighbourhood $\Omega'$ of $0\in\Omega$, such that $Y_{z'}\cap(\Omega'\times\Sigma)$ is a holomorphic disc 
centred at $z_0$. By \cite[Lem.\,2.5]{dm}, $Y_{z'}\cap(\Omega'\times\Sigma)\subset S_{z_0}$. 
It follows that the set $Y_{z'}\cap(\Omega'\times\Sigma)\cap S_{z_0}$ contains a non-empty open subset of $Y_{z'}$, 
hence is of dimension $\dim Y_{z'}$, and so is not a proper subset of $Y_{z'}$, by Remark~\ref{rem:irreducible}\,(2). 
Thus $Y_{z'}\subset S_{z_0}$ and, in particular, $z'\in S_{z_0}$. Consequently $Y\cap(\Omega\times\Sigma)\subset S_{z_0}$, because $z'$ was arbitrary. Hence, by Remark~\ref{rem:irreducible}\,(2) again, $Y\subset S_{z_0}$, as required.
\end{proof}

\begin{lemma}[{cf. \cite[Thm.\,1.2]{dm}, see also \cite{df}}]
\label{lem:dm2}
Let $X,p,\vr,V,U_1$ and $U_2$ be as above. For a non-empty subset $Y$ of $U_2$,  with 
$Y\cap U_1 \ne \varnothing$, define
\[
Y^1=\bigcap_{z\in Y\cap U_1}S_z\quad\mathrm{and}\quad Y^2=\bigcap_{w\in Y^1\cap U_1}S_w\,.
\]
Then
\begin{enumerate}
\item $Y^1$ and $Y^2$ are complex analytic subsets of $U_2$. If $Y^1\cap U_1\neq\varnothing$, then $Y\cap U_1\subset Y^2\cap U_1$.
\item Moreover, if $Y$ is an irreducible positive-dimensional complex analytic subset of an open set in $U_2$, such that $Y\subset X$, then $Y\cap U_1\subset Y^1\cap U_1$.
\item If $Y\cap U_1\subset Y^1\cap U_1$, then $Y^2\subset Y^1$ and $Y^2\cap U_1\subset X$.
\end{enumerate}
\end{lemma}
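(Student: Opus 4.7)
The plan is to deduce each of the three statements from the symmetry \eqref{e:p1}, the fixed-point relation \eqref{e:p2}, and Lemma~\ref{lem:dm1}. For the analyticity assertion in (1), each $S_z$ with $z\in U_1$ is by construction a complex analytic subset of $U_2$, cut out by the single holomorphic equation $\vr(\cdot,\overline z)=0$. Hence $Y^1$ and $Y^2$, being common zero-loci of families of such holomorphic functions on $U_2$, are themselves complex analytic subsets of $U_2$ (any such intersection reduces locally to a finite one by the Noetherian property of the local rings of holomorphic functions). For the inclusion in (1), I would assume $Y^1\cap U_1\neq\varnothing$ and pick any $z\in Y\cap U_1$. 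For every $w\in Y^1\cap U_1$ one has $w\in S_z$ by definition of $Y^1$, hence $z\in S_w$ by \eqref{e:p1}. Intersecting over all such $w$ gives $z\in Y^2$, so $Y\cap U_1\subset Y^2\cap U_1$.

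Part (2) is then a direct application of Lemma~\ref{lem:dm1}: if $Y$ is irreducible, positive-dimensional, complex analytic, and contained in $X$, then for each $z\in Y\cap U_1$ the lemma gives $Y\subset S_z$, and intersecting yields $Y\cap U_1\subset\bigcap_{z\in Y\cap U_1}S_z=Y^1$, i.e.\ $Y\cap U_1\subset Y^1\cap U_1$.

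For (3), assume $Y\cap U_1\subset Y^1\cap U_1$. Then the index set of the intersection defining $Y^2$ contains the one defining $Y^1$, which yields
\[
Y^2=\bigcap_{w\in Y^1\cap U_1}S_w\;\subset\;\bigcap_{w\in Y\cap U_1}S_w\;=\;Y^1.
\]
Finally, if $p\in Y^2\cap U_1$, then $p\in Y^1\cap U_1$ by the inclusion just established, so applying the defining relation of $Y^2$ with the particular choice $w=p$ gives $p\in S_p$, and hence $p\in X$ by \eqref{e:p2}. The mildest obstacle in the whole argument is the analyticity of the infinite intersections defining $Y^1$ and $Y^2$; the remainder of the proof is purely formal and relies only on the symmetry and self-duality of the Segre varieties, together with Lemma~\ref{lem:dm1}.
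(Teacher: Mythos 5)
Your proof is correct and follows essentially the same route as the paper: analyticity of $Y^1,Y^2$ as intersections of the analytic sets $S_z$, the symmetry \eqref{e:p1} and relation \eqref{e:p2} for the inclusions in (1) and (3), and Lemma~\ref{lem:dm1} for (2). The only difference is cosmetic (you justify the arbitrary intersection via Noetherianity, which the paper leaves implicit, and in (3) you take $w=p$ directly rather than passing through \eqref{e:p1} again).
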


\begin{proof}
(1) The Segre varieties $S_z$ are complex analytic in $U_2$, for $z\in U_1$, hence so are $Y^1$ and $Y^2$.
By definition, $z\in Y^2$ iff $z\in S_w$ for all $w\in Y^1\cap U_1$. Hence, by \eqref{e:p1}, $z\in Y^2\cap U_1$ iff $w\in S_z$ for all $w\in Y^1\cap U_1$. On the other hand, $z\in Y\cap U_1$ implies that $w\in S_z$ for all $w\in Y_1$, and so $z\in Y^2$.

(2) Suppose now that $Y$ is an irreducible positive-dimensional complex analytic subset of an open set in $U_2$, such that $Y\subset X$. Then, by Lemma~\ref{lem:dm1}, $Y\subset S_z$ for every $z\in Y$, and so $Y\cap U_1\,\subset$ $\displaystyle{(\bigcap_{z\in Y\cap U_1}S_z)\cap U_1=Y^1\cap U_1}$.

(3) Finally, assume that $Y\cap U_1\subset Y^1\cap U_1$. Then $\displaystyle{\bigcap_{z\in Y^1\cap U_1}S_z\,\subset\,\bigcap_{z\in Y\cap U_1}S_z}$; i.e., $Y^2\subset Y^1$. For the proof of the last inclusion, let $z\in Y^2\cap U_1$ be arbitrary. Then $z\in S_w$ for every $w\in Y^1\cap U_1$, hence, by \eqref{e:p1} again, $w\in S_z$ for all $w\in Y^1\cap U_1$. In particular, $z\in S_z$, since $z\in Y^2\subset Y^1$. Therefore $z\in X$, by \eqref{e:p2}.
\end{proof}

\section{Topology of the set of points of positive complex dimension}
\label{sec:top}

In this section we prove that $\Ad$ is closed in $X$, for any $d\geq1$. The openness of the set of points of 
finite type in the hypersurface case was established already in \cite[Thm.\,4.11]{da1}, and later extended to 
smooth real analytic sets of arbitrary codimension in \cite{da2}. Via the equivalence between the finiteness of the type at $p$ and the property $p\notin\mathcal A^1$, which we recall in Section~\ref{sec:inf-type}, D'Angelo 
proved in \cite{da2} the openness of $X\setminus\mathcal A^1$. The result was recently reproved in \cite{dm}. 
In the proof of Proposition~\ref{prop:closed} below, we use Lemma~\ref{lem:dm2} to replace complex analytic germs by their representatives in a fixed open set (cf. \cite{dm}), and then show that their Hausdorff limit is contained in a complex analytic set in $X$ that has dimension at least $d$.

For a non-empty set $E\subset\C^n$ and a point $p\in\C^n$, put $d(p,E)=\inf\{d(p,q):q\in E\}$, 
where $d(p,q)$ is the Euclidean distance between $p$ and $q$. Recall that $\overline U_1$ being compact, 
the space $\K(\overline U_1)$ of closed subsets of $\overline U_1$ equipped with the \emph{Hausdorff distance}
\[
d_H(K_1,K_2) = \min \{r\geq0:d(x_1,K_2), d(x_2,K_1)\leq r \ \mathrm{for\ all}\ (x_1,x_2)\in K_1\times K_2 \}
\]
is a compact metric space (see, e.g., \cite{Munkres}). 

\begin{remark}
\label{rem:H-lim}
Suppose that the sequence $(K_j)_{j=1}^{\infty}\subset\K(\overline U_1)$ converges to $K$ in this metric, 
with $d_H(K_j,K)\leq 2^{-j}$. Then $K$ is precisely the set of points $p$ for which there is a sequence 
$(p_j)_{j=1}^{\infty}$ with $p_j \in K_j$ and $d(p_j,p)\leq 2^{-j}$.
In particular, if $K_j \subseteq L_j$ are closed subsets of $\overline U_1$ with 
the sequence $(K_j)$ (resp. $(L_j)$) converging to the set $K$ (resp. $L$), then $K\subseteq L$.
\end{remark}
\medskip

\begin{proposition}
\label{prop:closed}
Let $X$ be a closed real analytic subset of an open set in $\C^n$, and let $\Ad$ be the set of points $p$ in $X$, 
such that $X_p$ contains a complex analytic germ of dimension $d$. Then $\Ad$ is closed in $X$, for every $d\geq1$. 
\end{proposition}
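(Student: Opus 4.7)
The plan is to take a sequence $(p_j)\subset\Ad$ with $p_j\to p_\infty$ and produce a complex analytic germ of dimension $d$ contained in $X_{p_\infty}$; since $\Ad\subset X$ and $X$ is closed, $p_\infty\in X$. Fix a defining function $\vr$ and polydisc neighbourhoods $U_1\Subset U_2\Subset V$ of $p_\infty$ as in Section~\ref{s:2}, so that $p_j\in U_1$ for $j$ large. For each such $j$, I pick an irreducible $d$-dimensional complex analytic germ $Y_j\subset X_{p_j}$ and shrink so that its representative is an analytic subset of $U_1$.

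My first step is to replace the germs $Y_j$ (whose ``reach'' may shrink as $j\to\infty$) by complex analytic sets defined in the fixed polydisc $U_2$. Applying Lemma~\ref{lem:dm2}\,(2)--(3) to $Y_j$ produces a complex analytic subset $Y_j^2\subset U_2$ with $Y_j^2\cap U_1\subset X$ and $Y_j\cap U_1\subset Y_j^2\cap U_1$. Let $Z_j$ be an irreducible component of $Y_j^2$ containing $Y_j$; then $Z_j$ is irreducible analytic in $U_2$, satisfies $Z_j\cap U_1\subset X$, and has $\dim Z_j=k$ for some $d\leq k\leq n-1$ (constant after a subsequence; the case $k=n$ would force $U_1\subset X$, a trivial situation). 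A crucial observation is that $Z_j\subset S_z$ for every $z\in Z_j\cap U_1$: the irreducible component of $Z_j\cap U_1$ through $z$ has full dimension $k$ in $Z_j$ and, by Lemma~\ref{lem:dm1}, is contained in $S_z$; hence $\vr(\cdot,\bar z)$ vanishes on a nonempty open subset of the irreducible set $Z_j$, and the identity theorem yields $Z_j\subset S_z$.

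Next I pass to a subsequence so that $Z_j\cap\overline{U_1}$ converges in Hausdorff distance to a closed set $K\subset\overline{U_1}$ (possible by compactness of $\K(\overline{U_1})$). By Remark~\ref{rem:H-lim} and closedness of $X$, $p_\infty\in K\subset X$. For any $z\in K\cap U_1$, there exists $z_j\in Z_j\cap U_1$ with $z_j\to z$; combining $Z_j\subset S_{z_j}$ with continuity of $(z,w)\mapsto\vr(z,\bar w)$ gives $K\subset S_z$. Hence $K\cap U_1\subset K^1\cap U_1$, and Lemma~\ref{lem:dm2}\,(1) and (3) imply that $K^2$ is a complex analytic subset of $U_2$ containing $K$ (and thus $p_\infty$), with $K^2\cap U_1\subset X$. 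Therefore $(K^2)_{p_\infty}$ is a complex analytic germ of $X$ at $p_\infty$.

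The main obstacle is to prove $\dim_{p_\infty}K^2\geq d$, i.e., that the Hausdorff limit retains complex dimension. My approach is to exploit that each $Z_j$ is irreducible of constant dimension $k$ in the polydisc $U_2$ and, by the maximum principle, cannot be compactly contained in $U_2$. Using Remark~\ref{rem:irreducible}\,(3)--(4) together with the pigeonhole principle on the finite set $\Lambda(k,n)$, I expect that after a further subsequence there is a single $\lb\in\Lambda(k,n)$ for which the coordinate projection $\pi_\lb$ restricted to $Z_j$ has $k$-dimensional image containing a uniform open neighbourhood of $\pi_\lb(p_\infty)$. Passing to the Hausdorff limit, $\pi_\lb(K)$ would then contain this open neighbourhood, so $\pi_\lb(K^2)\supset\pi_\lb(K)$ would have nonempty interior near $\pi_\lb(p_\infty)$, forcing $\dim_{p_\infty}K^2\geq k\geq d$ since $K^2$ is complex analytic. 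Extracting such a uniform neighbourhood from Segre-variety considerations alone — without invoking volume estimates or Bishop's theorem, as advertised in the introduction — is the delicate heart of the argument.
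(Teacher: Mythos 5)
Your overall route is the same as the paper's: replace the shrinking germs $Y_j$ by the Segre double-intersections $Y_j^2$ of Lemma~\ref{lem:dm2}, so as to get irreducible analytic representatives $Z_j\subset U_2$ of dimension $k=d'\geq d$ with $Z_j\cap U_1\subset X$; pass to a Hausdorff limit $K$ (the paper's $Y_0$); show $K\cap U_1\subset K^1\cap U_1$ and conclude via Lemma~\ref{lem:dm2}\,(1),(3) that $K^2$ is a complex analytic subset of $U_2$ through $p_\infty$ with $K^2\cap U_1\subset X$. Your way of getting $K\subset S_z$ for $z\in K\cap U_1$ --- first $Z_j\subset S_{z_j}$ via Lemma~\ref{lem:dm1} and irreducibility, then continuity of $\vr(\cdot,\overline{\,\cdot\,})$ along approximating points --- is correct and in fact somewhat cleaner than the paper's argument with $\tilde Y^1_0$ and a finite subfamily $S_{a_1},\dots,S_{a_r}$. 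Up to this point there is nothing to object to.

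The problem is the last step, which is the entire content of the proposition: the inequality $\dim_{p_\infty}K^2\geq d$. You explicitly leave it at ``I expect that\dots'', and as written there is no argument there. Two things are missing. First, Remark~\ref{rem:irreducible}\,(4) only provides a proper surjective projection after a \emph{generic linear} change of coordinates, not onto one of the coordinate subspaces $z_\lb$, so pigeonholing over the finite set $\Lambda(k,n)$ is not justified. Second, and more importantly, the maximum principle (``$Z_j$ is not compactly contained in $U_2$'') gives no control of the \emph{size} of the polydisc over which $Z_j$ projects properly and surjectively: nothing you prove rules out that these polydiscs shrink to a point as $j\to\infty$, and extracting a uniform one is precisely the issue. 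The paper closes this gap using the normalization \eqref{eq:fast}: after passing to a subsequence with $d_H(Y_j\cap\overline U_1,Y_0)\leq 2^{-j}$ one gets the uniform estimate \eqref{eq:estimate}, $d_H(Y_j\cap\overline U_1,Y_{j+k}\cap\overline U_1)<2^{-(j-1)}$; hence a single generic linear coordinate system at $p_0$ and a single polydisc $P(p_0,\delta)$, chosen so that one $Y_{j_0}$ (with $2^{-(j_0-1)}$ small compared to the margins of properness) projects properly and surjectively onto the first $d'$ coordinates, works simultaneously for all but finitely many $Y_j$; surjectivity then passes to the Hausdorff limit by Remark~\ref{rem:H-lim}, so $(Y_0)_{p_0}$ has Hausdorff dimension at least $2d'$, whence $\dim_{p_0}Y^2_0\geq d'\geq d$ (alternatively the paper cites van den Dries's theorem on limit sets). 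Until you supply an argument of this kind --- a uniform-in-$j$ choice of coordinates and polydisc, obtained from the Hausdorff convergence itself --- your proposal does not prove the proposition.
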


\begin{proof}
Fix $d\geq1$, and let $p_0\in X$ be a limit point of $\Ad$. Then there exists a sequence of $d$-dimensional complex analytic germs $(Y_j)_{p_j} \subset X_{p_j}$  at points $p_j\in X$ such that $p_0 = \lim_{j\to\infty} p_j$. 
We restrict our considerations to neighbourhoods $U_1$ and $U_2$ of $p_0$, as discussed in Section~\ref{s:2}.
Without loss of generality, we may assume that the $Y_j$ are irreducible.

One difficulty arising here is that the $(Y_j)_{p_j}$ may not simultanously admit representatives in a fixed 
neighbourhood of $p_0$. We can, however, replace the $Y_j$ by irreducible complex analytic subsets of $U_2$ by 
setting
\[
Y^1_j\,= \bigcap_{z\in Y_j\cap U_1} S_z \qquad\mathrm{and}\qquad Y^2_j\,= \bigcap_{w\in Y^1_j\cap U_1} S_w\,.
\]
Indeed, by Lemma~\ref{lem:dm2}, the $Y_j^1$ and $Y_j^2$ are complex analytic subsets of $U_2$, 
$Y_j\subset Y^2_j$ and $Y^2_j\cap U_1\subset X$. 
The first inclusion implies also that $\dim Y^2_j\geq d$, for all $j$, since the $Y_j$ are $d$-dimensional. 
We may also assume that the $Y_j^2$ are irreducible, by 
keeping only one irreducible component of $Y^2_j$ passing through $p_j$.
To simplify the notation, from now on we denote $Y^2_j$ by $Y_j$. Since $\dim Y_j\in\{d,\dots,n-1\}$ for all $j$, 
there exists an integer $d'\geq d$ such that $\dim Y_j=d'$ for infinitely many $j$. 
Let us then replace the original sequence $(Y_j)_{j=1}^{\infty}$ by this infinite subsequence.

By compactness of $\K(\overline U_1)$, the sequence $(Y_j\cap \overline U_1)_{j=1}^{\infty}$ contains an 
infinite subsequence convergent in the Hausdorff metric to a set $Y_0$ closed in 
$\overline U_1$. Therefore, without loss of generality, we may assume that
\[
Y_0={\lim}_H(Y_j\cap\overline U_1)\,,
\]
and further that 
\begin{equation}
\label{eq:fast}
d_H(Y_j\cap\overline U_1,Y_0)\leq2^{-j}
\end{equation}
 (by throwing out some terms of the sequence, if necessary).
Notice that $p_0=\lim_{j\to\infty}p_j$ belongs to $Y_0$, by Remark~\ref{rem:H-lim}.

We will show that $Y_0$ is contained in a complex analytic set, of dimension at least $d$, contained in $X$.
Set
\[
Y^1_j = \bigcap_{z\in Y_j\cap U_1} S_z, \quad Y^1_0 = \bigcap_{z\in Y_0\cap U_1} S_z, \quad{\rm and\ \ } 
\tilde Y^1_0 = {\lim}_H(Y^1_j\cap\overline U_1)\,,
\]
where ${\lim}_H(Y^1_j\cap\overline U_1)$ is again the limit of (an infinite convergent subsequence of) 
$Y^1_j\cap \overline U_1$ in the sense of the Hausdorff metric on $\K(\overline U_1)$. (Notice that replacing 
$(Y_j\cap \overline U_1)_{j=1}^{\infty}$ by its infinite convergent subsequence does not affect $Y_0$.)
 We may further assume that $d_H(Y^1_j\cap\overline U_1,\tilde Y^1_0)\leq2^{-j}$, as above.

We claim that $\tilde Y^1_0 \subset Y^1_0$. Indeed, there exist points $\{a_1,\dots, a_r\}\subset Y_0$ such
that $Y^1_0=\bigcap_{k=1}^r S_{a_k}$, by compactness of $\overline U_2$ and Remark~\ref{rem:irreducible}\,(1).
Therefore, there exist $r$ sequences $(a_k^j)_{j=1}^\infty$, such that $a_k^j\in Y_j$ and $\lim_{j\to\infty}a_k^j=a_k$, $k=1,\dots, r$ (see Remark~\ref{rem:H-lim}). 
From the analytic dependence of Segre varieties $S_z$ on the parameter $z$, we conclude that
\[
{\lim}_H (\bigcap_{k=1}^r S_{a^j_k}) \subset \bigcap_{k=1}^r S_{a_k}=Y^1_0\,;
\]
for if $z\in{\lim}_H \bigcap_{k=1}^r S_{a^j_k}$, we can find $z^j\in \bigcap_{k=1}^r S_{a^j_k}$ such that $\lim_{j\to\infty}z^j=z$, hence
\[
\vr(z,\overline a_k)=\lim_j \vr(z^j, \overline a^j_k)=0
\]
for each $k\in \{1 ,\cdots , r\}$.

Also, since $a^j_k\in Y_j$, for every fixed $j$ we have $Y^1_j \subset \bigcap_{k=1}^r S_{a^j_k}$.
From this we conclude that ${\lim}_H Y^1_j \subset Y^1_0$, which proves the claim.
\medskip

We now claim that $Y_0\cap U_1 \subset Y^1_0\cap U_1$. Indeed, since the $Y_j\cap U_1$ are irreducible positive-dimensional complex analytic sets in $U_1$, and 
subsets of $X$, we have $Y_j\cap U_1\subset Y^1_j\cap U_1$, by Lemma~\ref{lem:dm2}(2). Therefore, by Remark~\ref{rem:H-lim},
${\lim}_H (Y_j\cap U_1) \subset {\lim}_H (Y^1_j\cap U_1)= \tilde Y^1_0$, and hence $Y_0\cap U_1={\lim}_H (Y_j\cap U_1) \subset Y^1_0\cap U_1$, by the previous claim. 
In particular, the set $Y^1_0\cap U_1$ is not empty. Let 
\[
Y^2_0 = \bigcap_{z\in Y^1_0\cap U_1} S_z\,.
\]
Then $Y^2_0\subset U_2$ is a complex analytic set, such that $Y^2_0\cap U_1\subset X$ and $\dim_{p_0} Y^2_0\geq d$. 
Indeed, since $Y_0\cap U_1 \subset Y^1_0\cap U_1$, Lemma~\ref{lem:dm2} implies that $Y^2_0\cap U_1\subset X$.
Given $z\in Y_0\cap U_1$, we have $w\in S_z$ for every $w\in Y^1_0$, by definition of $Y^1_0$. Hence $z\in S_w$ for every $w\in Y^1_0\cap U_1$, by \eqref{e:p1}, and so 
$z\in Y^2_0$. Therefore $Y_0\cap U_1\subset Y_0^2$. It thus suffices to show that the Hausdorff dimension of $(Y_0)_{p_0}$ is at least $2d'$.
This is a consequence of \cite[Thm.\,4.2]{vdd}, but one can also argue directly as follows.

Recall that, for every $j\geq1$, $Y_j$ is an irreducible $d'$-dimensional complex analytic subset of $U_2$ (where $d'\geq d$) passing through $p_j$, and such that 
$Y_j\cap U_1\subset X$. By (\ref{eq:fast}), we have
\begin{equation}
\label{eq:estimate}
d_H(Y_j\cap\overline U_1,Y_{j+k}\cap\overline U_1)<2^{-(j-1)}\,.
\end{equation}
Since $\lim_{j\to\infty}p_j=p_0$, it follows that, for every $\delta=(\delta_1,\dots,\delta_n)$ with $\delta_l>0$, all but finitely many $Y_j$ have non-empty intersection 
with a polydisc $P(p_0,\delta)=\{z=(z_1,\dots,z_n)\in\C^n:|z_l-p_{0l}|<\delta_l\}$. For every $j$, there exist 
$\delta$ and a generic system of coordinates 
$z=(z_1,\dots,z_{d'},z_{d'+1},\dots,z_n)$ at $p_0$, such that $Y_j\cap P(p_0,\delta)$ has proper and surjective projection onto the $(z_1,\dots,z_{d'})$-variables 
(see Remark~\ref{rem:irreducible}\,(4)). By \eqref{eq:estimate}, we may choose a positive $\delta$ and a system of coordinates $z$ at $p_0$ 
such that all but finitely many of the $Y_j\cap P(p_0,\delta)$ simultaneously have proper and surjective projection onto the $(z_1,\dots,z_{d'})$-variables. 
Therefore the same must be true for the Hausdorff limit $Y_0\cap P(p_0,\delta)$, by Remark~\ref{rem:H-lim}. 
Thus the Hausdorff dimension of $(Y_0)_{p_0}$ is at least $2d'\geq2d$, and hence $p_0\in\Ad$, which completes the proof of the proposition.
\end{proof}

\section{Finiteness and Noetherianity in analytic families}
\label{sec:sub}

In this section we prove two finiteness properties for intersections of 
elements in a family of analytic sets that will be used in the proof of 
Theorem~\ref{t:2}. We begin with some basic facts about semi- and subanalytic sets.

Recall that a subset $E$ of a real analytic manifold $M$ is called {\em semianalytic} if it
is locally defined by finitely many real analytic equations and inequalities. More precisely,
for each $p\in M$, there is a neighbourhood $U$ of $p$, and real 
analytic in $U$ functions $f_i,g_{ij}$, where $i=1,\ldots, r$, $j=1,\ldots, s$, such that
\[
E\cap U =\bigcup_{i=1}^r\left(\bigcap_{j=1}^s\{x\in U\colon g_{ij}(x)>0 \text{ and } f_i(x)=0\}\right)\,.
\]
A real analytic set is clearly semianalytic. A \emph{subanalytic} subset $E$ of a real analytic manifold $M$ is one which can be locally represented as the proper projection of a semianalytic set. More precisely, for every $p\in M$, there exist a neighbourhood
$U$ of $p$ in $M$, a real analytic manifold $N$, and a relatively 
compact semianalytic set
$Z\subset M\times N$ such that $E\cap U=\pi(Z)$, where $\pi:M\times
N\rightarrow M$ is the natural projection. In particular, semianalytic 
sets are subanalytic.
For details on semi- and subanalytic sets we refer the reader to \cite{bm}.

The class of semianalytic (resp. subanalytic) sets is closed under natural topological operations:
locally finite unions and intersections, set-theoretic differences, complements,
topological closures and interiors of semianalytic (resp. subanalytic) sets are semianalytic (resp. subanalytic). 
Subanalytic sets are furthermore closed under the operation of taking proper projections to linear subspaces.

\begin{remark}
\label{rem:fibre-comps}
An important property of subanalytic sets is that the number of connected components of fibres of a projection is locally bounded 
(see, e.g., \cite[Thm.\,3.14]{bm}): If $S$ is a relatively compact subanalytic subset of $\R^m \times \R^n$, and $D\subset\R^m$ is compact, 
then there is a positive integer $k_D$ such that the number of connected components of the set $\pi^{-1}(x)$ is bounded above by $k_D$ for all 
$x\in D$, where $\pi$ is the restriction to $S$ of the canonical projection $\R^m\times\R^n\to\R^m$.
\end{remark} 

\begin{lemma}
\label{lem:fin1}
Let $S$ be a subanalytic subset of $\C^m\times\C^n$. Let $\Omega_1$ and $\Omega_2$ be relatively compact open subsets of $\C^m$ and $\C^n$ respectively, 
and let $D_1\subset\C^m$ and $D_2\subset\C^n$ be open polydiscs, such that $\overline D_1\subset\Omega_1$ and $\overline D_2\subset\Omega_2$.
Suppose that for every point $a\in D_1$, the set $S_a=\{ b\in\Omega_2\,:\,(a,b)\in S \}$ is a complex analytic subset of $\Omega_2$. 
Then there is a positive integer $N$ such that, for every $a\in D_1$, the analytic set $S_a\cap D_2$ has at most $N$ irreducible components.   
\end{lemma}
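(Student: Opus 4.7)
The plan is to reduce the count of irreducible components of $S_a\cap D_2$ to a count of connected components of fibres of a subanalytic set, then to apply Remark~\ref{rem:fibre-comps}. By Remark~\ref{rem:irreducible}(1), the number of irreducible components of the complex analytic set $S_a\cap D_2$ equals the number of connected components of its regular locus $(S_a\cap D_2)^{\reg}$. It thus suffices to produce, uniformly in $a\in D_1$, a bound on the number of connected components of $(S_a\cap D_2)^{\reg}$.

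Using the graph characterization of regular points from Remark~\ref{rem:irreducible}(3), I will introduce, for each $d\in\{0,\dots,n-1\}$ and each $\lambda\in\Lambda(d,n)$, the set
\[
T_{d,\lambda} := \bigl\{(a,b)\in D_1\times D_2 \,:\, \exists\text{ polydisc } U=U_\lambda\times U_{\lambda^c}\ni b\text{ with }\pi_\lambda|_{S_a\cap U}\text{ a bijection onto }U_\lambda\bigr\}.
\]
By that remark, $(S_a\cap D_2)^{\reg}=\bigcup_{d,\lambda}(T_{d,\lambda})_a$, and each $(T_{d,\lambda})_a$ is open in $S_a$. The crucial step will be to verify that each $T_{d,\lambda}$ is subanalytic. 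Its defining condition takes the form $\exists\,\epsilon_1,\epsilon_2\in(0,1)$ such that the restriction of $\pi_\lambda$ to $S_a\cap\bigl(B_\lambda(b_\lambda,\epsilon_1)\times B_{\lambda^c}(b_{\lambda^c},\epsilon_2)\bigr)$ is both injective (a universal statement over points of $S_a$ in a bounded region) and surjective onto $B_\lambda(b_\lambda,\epsilon_1)$ (a $\forall\exists$-statement over bounded sets). Since all quantifiers range over bounded subanalytic sets, the corresponding projections are proper; combined with Gabrielov's theorem on stability of subanalytic sets under complementation, this gives subanalyticity of $T_{d,\lambda}$.

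Finally, Remark~\ref{rem:fibre-comps} applied to each relatively compact $T_{d,\lambda}\subset\Omega_1\times\Omega_2$, with compact base $\overline{D_1}$, yields an integer $N_{d,\lambda}$ bounding the number of connected components of $(T_{d,\lambda})_a$ for all $a\in\overline{D_1}$. Since each connected component of $(S_a\cap D_2)^{\reg}$ must contain at least one connected component of some $(T_{d,\lambda})_a$ (the $T_{d,\lambda}$'s being open in $S$ and jointly covering the regular locus), summing over all pairs $(d,\lambda)$ produces the uniform bound $N:=\sum_{d,\lambda}N_{d,\lambda}$. The main obstacle is the subanalyticity of $T_{d,\lambda}$: one must express the bijection condition as a Boolean combination of proper projections of subanalytic relations, so that the closure properties of the class of subanalytic sets can be invoked.
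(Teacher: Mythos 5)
Your proposal is correct and follows essentially the same route as the paper: reduce to counting connected components of the regular locus via Remark~\ref{rem:irreducible}(1), express regularity through the graph/bijection criterion of Remark~\ref{rem:irreducible}(3) with bounded quantifiers so that the resulting relation is subanalytic (proper projections plus Gabrielov complementation), and conclude with the uniform fibre-component bound of Remark~\ref{rem:fibre-comps}. The only difference is bookkeeping: you split the regular locus into the finitely many sets $T_{d,\lambda}$ and sum the bounds, whereas the paper treats the single subanalytic set $\{(a,b): b\in (S_a\cap D_2)^{\reg}\}$ and applies the fibre bound once.
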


\begin{proof}
By Remark~\ref{rem:irreducible}\,(1), it suffices to show that there is a positive integer $N$ such that for every $a\in D_1$, the analytic set 
$(S_a\cap D_2)^{\reg}$ has at most $N$ connected components. Using Remark~\ref{rem:fibre-comps}, the latter would be a consequence of the 
subanalyticity of the set
\[
\{ (a,b)\in D_1\times D_2 :  b\in (S_a\cap D_2)^{\reg}\}\,.
\]
Remark~\ref{rem:irreducible}\,(3) ensures that this set is precisely the set of pairs $(a,b)$ in $D_1\times D_2$ 
for which there is a natural number $d$ and a choice of coordinate indices $(j_1,\dots j_d)\in \{1,\dots,n\}^d$, such that 
there is a number $\ve >0$ small enough so that for all $(z_{j_1},\dots,z_{j_d}) \in \C^d$ with $|z_{j_l}-b_{j_l}|<\ve$  
($l=1,\dots,d$) there is a unique $b'=(b'_1,\dots,b'_n)$ satisfying $b'\in Y\cap \mathbb B(b,\ve)$ and $b'_{j_l}=z_{j_l}$ ($l=1,\dots,d$).

The set  $\{ (a,b)\in D_1\times D_2 :  b\in (S_a\cap D_2)^{\reg}\}$ is thus the proper projection (\emph{``there exists''}) of the complement of the proper projection (\emph{``for all''}) of the complement of the proper projection of a semianalytic set, and is therefore subanalytic.
\end{proof}

Using this lemma, we can now prove the following proposition.

\begin{proposition}
\label{prop:fin2}
Under the notation of the previous lemma, there is a positive integer $L$ such that for any set $A \subset D_1$ 
there is an $L$-tuple $( a_1,\cdots, a_L ) \in A^L$ for which
\[
(\bigcap _{a\in A} S_a )\cap D_2= S_{a_1} \cap \cdots \cap S_{a_L} \cap D_2\,.
\]
\end{proposition}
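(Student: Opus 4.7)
The plan is to construct the required tuple greedily and then bound the length of the resulting chain uniformly using Lemma~\ref{lem:fin1}. Given $A\subset D_1$, I pick $a_1\in A$ arbitrarily and at each stage try to find $a_{k+1}\in A$ with $S_{a_1}\cap\cdots\cap S_{a_{k+1}}\cap D_2\subsetneq S_{a_1}\cap\cdots\cap S_{a_k}\cap D_2$; if no such $a_{k+1}$ exists, the current intersection already equals $\bigcap_{a\in A}S_a\cap D_2$. It therefore suffices to produce an $L$, depending only on $S,D_1,D_2,\Omega_1,\Omega_2$, that bounds the length of every such strictly descending chain.

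The first ingredient is to apply Lemma~\ref{lem:fin1} not only to $S$ but to each of the auxiliary subanalytic sets
\[
S^{(k)}:=\{((a_1,\dots,a_k),b)\in\C^{km}\times\C^n:(a_i,b)\in S\text{ for every }i=1,\dots,k\}
\]
taken together with the polydisc $D_1^k\subset\Omega_1^k\subset\C^{km}$. The fibre of $S^{(k)}$ over $(a_1,\dots,a_k)\in D_1^k$ is the finite intersection $S_{a_1}\cap\cdots\cap S_{a_k}\cap\Omega_2$, which is analytic, so the lemma yields a uniform bound $N_k$ on the number of irreducible components of $S_{a_1}\cap\cdots\cap S_{a_k}\cap D_2$ as $(a_1,\dots,a_k)$ ranges over $D_1^k$.

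The second ingredient is a strict lexicographic descent along the chain. Writing $X_k:=S_{a_1}\cap\cdots\cap S_{a_k}\cap D_2$ and $c_j(X_k)$ for the number of $j$-dimensional irreducible components of $X_k$, let $i^*$ be the largest dimension at which some irreducible component of $X_k$ is \emph{not} contained in $S_{a_{k+1}}$. Every irreducible component of $X_k$ of dimension $>i^*$ is contained in $S_{a_{k+1}}$ and persists unchanged as an irreducible component of $X_{k+1}$, so $c_j(X_{k+1})=c_j(X_k)$ for $j>i^*$, while at least one $i^*$-dimensional component is properly cut to pieces of dimension $<i^*$, yielding $c_{i^*}(X_{k+1})<c_{i^*}(X_k)$. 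Hence the dimension vector $(c_n(X_k),c_{n-1}(X_k),\dots,c_0(X_k))$ strictly decreases in the lexicographic order at every step of the chain.

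The main obstacle, and what I expect to be the most delicate part, is extracting uniform coordinate-wise bounds $c_j(X_k)\leq C_j$, independent of $k$ and the particular chain; once these are in place, lex descent inside $\prod_j\{0,1,\dots,C_j\}$ forces the chain to terminate in at most $L:=\prod_{j=0}^n(C_j+1)$ steps. The top coordinate is immediate, since $D_2$ is a connected polydisc forces $c_n(X_k)\in\{0,1\}$, and $c_n$ is non-increasing along the chain. For $j<n$ the count $c_j(X_k)$ can temporarily \emph{grow} whenever a higher-dimensional component is cut, so a uniform bound has to be extracted by iteratively applying Lemma~\ref{lem:fin1} to further auxiliary subanalytic families (roughly, the families obtained by intersecting components of $S_{a_i}$'s and reparametrizing), combined with an induction on $n-j$ that controls how many new components of a given dimension can appear at each ``cut'' event. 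Carrying out this bookkeeping cleanly, and confirming that the subanalytic setup survives the reparametrizations, is the technical heart of the argument.
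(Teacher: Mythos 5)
Your reduction to bounding the length of strictly descending chains, and your lexicographic-descent observation (the component-count vector $(c_n,c_{n-1},\dots,c_0)$ strictly decreases in lex order whenever $X_{k+1}\varsubsetneq X_k$), are exactly the paper's steps, including the application of Lemma~\ref{lem:fin1} to the $k$-fold families to get bounds $N_k$ that depend on $k$. But the step you yourself flag as the ``technical heart'' --- uniform coordinate-wise bounds $c_j(X_k)\leq C_j$ independent of $k$ and of the chain, so that descent takes place inside the finite box $\prod_j\{0,\dots,C_j\}$ --- is a genuine gap, not a routine bookkeeping matter. The natural induction you sketch (new $j$-dimensional components are born only when a $(j+1)$-dimensional component is cut) does not close: the number of pieces produced by a single cut of a component of $S_{a_1}\cap\dots\cap S_{a_k}\cap D_2$ is only controlled by $N_{k+1}$, so the dependence on $k$ re-enters at every dimension below $n-1$ and the induction on $n-j$ never becomes uniform. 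Worse, such uniform bounds are essentially a consequence of the proposition itself (once $L$ exists, every intersection is an $L$-fold one and $N_L$ works), so trying to establish them first risks circularity.

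The paper avoids needing any uniformity in $k$. It argues by contradiction: if no $L$ existed, then for every $l$ the set $T_l$ of $l$-tuples producing a strictly lex-decreasing chain $N(1;a_1)>_{lex}\dots>_{lex}N(l;a_1,\dots,a_l)$ is nonempty; since for each \emph{fixed} $l$ Lemma~\ref{lem:fin1} makes the set of possible vectors $N(l;a_1,\dots,a_l)$ finite, one may take $N(l)$ to be the lex-maximum over $T_l$, and comparing a maximizer for $T_{l+1}$ truncated to length $l$ with the definition of $N(l)$ gives $N(l)>_{lex}N(l+1)$ for all $l$. This produces an infinite strictly decreasing sequence in $\N^{n+1}$, contradicting the fact that the lexicographic order well-orders $\N^{n+1}$. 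So the per-level finiteness you already have, combined with this maximality/well-ordering trick, is all that is needed; if you want to complete your write-up, replace the search for uniform $C_j$'s by this argument.
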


\begin{proof}
Given $l\geq1$ and $(a_1,\cdots,a_l)\in (D_1)^l$, let $N(l;a_1, \cdots, a_l)$ denote the $(n+1)$-tuple of natural numbers whose $k$'th coordinate 
is the number of irreducible components of dimension $n-k+1$ of $S_{a_1}\cap \ldots S_{a_l}\cap D_2$. 

Applying Lemma~\ref{lem:fin1} to the subanalytic set 
$\{(a_1,\ldots,a_l,b)\in (\C^m)^l\times \C ^n : b\in S_{a_1} \cap \ldots \cap S_{a_l}\}$, 
we conclude that the number of such components of any dimension is bounded above independently 
of the choice of $(a_1,\cdots, a_l)$ 
(but {\it a priori} not independently of $l$). 
Hence $N(l;a_1, \cdots, a_l)$ is well-defined for all $(a_1,\dots,a_l)\in(D_1)^l$, and the set
\[
\{ N(l;a_1, \cdots, a_l) : (a_1,\cdots,a_l)\in(D_1)^l \}
\]
is a finite subset of $\N^{n+1}$. 

Let us order $\N^{n+1}$ lexicographically. Observe that 
\begin{equation}
\label{decroit}
N(l;a_1, \cdots, a_l)\geq _{lex} N(l+1;a_1, \cdots, a_{l+1})
\end{equation}
for any $(a_1,\cdots , a_{l+1})\in(D_1)^{l+1}$. Indeed, by intersecting $S_{a_1} \cap \cdots \cap S_{ a_l}$ with $S_{a_{l+1}}$ 
we may only decrease lexicographically the number of irreducible components: an irreducible component $Z_{\mu}$ of $S_{a_{l+1}}$ 
either contains all the irreducible components of $S_{a_1} \cap \cdots \cap S_{ a_l}$, in which case our $(n+1)$-tuple is not affected, 
or else there is an irreducible component $W_{\nu}$ of $S_{a_1} \cap \cdots \cap S_{ a_l}$, of dimension, say,  $k$, such that 
$Z_{\mu}\cap W_{\nu}\varsubsetneq W_{\nu}$. In the latter case, by Remark~\ref{rem:irreducible}\,(2), the set $Z_{\mu}\cap W_{\nu}$ is of 
dimension strictly smaller than $k$, and so the number of $k$-dimensional components in $S_{a_1} \cap \cdots \cap S_{ a_{l+1}}$ is strictly 
less that that in $S_{a_1} \cap \cdots \cap S_{ a_l}$.

Suppose for a contradiction that the number $L$ from the proposition does not exist. Then for every $l\geq1$, the set
\begin{multline*}
T_l\ :=\ \{\,(a_1,\dots,a_l)\in D_1^l :\\ N(1;a_1)>_{lex} N(2;a_1,a_2) >_{lex}\dots >_{lex} N(l;a_1, \dots, a_l) \,\}
\end{multline*}
is nonempty. Let $N(l)$ be the (lexicographic) maximum among the tuples $N(l;a_1,\dots,a_l)$ as $(a_1,\dots,a_l)\in T_l$, and let 
$(b_1^l , \cdots ,b_l^l)\in T_l$ be such that $N(l)=N(l;b_1^l,\dots,b_l^l)$.
It follows that
\[
N(l)\;\geq _{lex} N(l;b^{l+1}_1, \cdots, b^{l+1}_l)\;>_{lex} N(l+1;b^{l+1}_1, \cdots, b^{l+1}_{l+1})\;=\; N(l+1)\,,
\]
for all $l\geq1$. Hence there exists a strictly decreasing infinite sequence of $(n+1)$-tuples
\[
N(1)\ >_{lex}N(2)\ >_{lex}\dots\ >_{lex}N(l)\ >_{lex}\dots\,,
\]
which contradicts the fact that $\geq _{lex}$ is a well-ordering of $\N^{n+1}$.
\end{proof}

For $1\leq d < n$, and $\lb\in \Lambda(d,n)$, let 
\begin{equation}\label{e:proj}
\pi_{\lb}=\pi_{\lb_1,\dots,\lb_d}: \mathbb C^n\to \mathbb C^d
\end{equation}
be the canonical projection
from $\C^n$ onto (its linear subspace spanned by) the variables $z_{\lb}=(z_{\lb_1},\dots,z_{\lb_d})$. Let
$z_{\mu}=(z_{\mu_1},\cdots,z_{\mu_{n-d}})$ be the $(n-d)$-tuple of the remaining variables (that is, 
$\{1,\cdots, n\}=\{\lb_1,\dots,\lb_d\}\cup\{\mu_1, \cdots , \mu_{n-d} \}$, with 
$1\leq \mu_1< \cdots < \mu_{n-d}\leq n$).

\begin{corollary}
\label{rem:fin3}
Under the notation of Lemma~\ref{lem:fin1}, there exists a positive integer $\kp$ such that for every non-empty $A\subset D_1$ and any $\lb$, the number of irreducible components of a fibre of $\pi_{\lb}|_{(\bigcap _{a\in A} S_a \cap D_2)}$ is bounded above by $\kp$.
\end{corollary}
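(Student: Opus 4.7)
The plan is to combine Proposition~\ref{prop:fin2}, which reduces every intersection $\bigcap_{a\in A}S_a\cap D_2$ to an intersection of a fixed number $L$ of Segre varieties, with an application of Lemma~\ref{lem:fin1} to a subanalytic family whose parameters encode both the $L$-tuple of Segre parameters and the base point $c$ of the projection $\pi_{\lb}$.

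Fix $\lb\in\Lambda(d,n)$ and let $L$ be the integer provided by Proposition~\ref{prop:fin2}. I would introduce the set
\[
\tilde S_{\lb}\ :=\ \{(a_1,\dots,a_L,c,b)\in\Omega_1^L\times\C^d\times\Omega_2:(a_i,b)\in S\text{ for }i=1,\dots,L,\ \pi_{\lb}(b)=c\},
\]
which is subanalytic as an intersection of $L$ pullbacks of $S$ with the linear constraint $\pi_{\lb}(b)=c$. For each fixed $(a_1,\dots,a_L,c)$ in the polydisc $D_1^L\times\pi_{\lb}(D_2)$, the corresponding fibre of $\tilde S_{\lb}$ equals the complex analytic set $S_{a_1}\cap\cdots\cap S_{a_L}\cap\pi_{\lb}^{-1}(c)$ in $\Omega_2$, being the intersection of Segre varieties with an affine complex subspace. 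Applying Lemma~\ref{lem:fin1} to $\tilde S_{\lb}$, with parameter polydisc $D_1^L\times\pi_{\lb}(D_2)$ and fibre polydisc $D_2$, yields an integer $N_{\lb}$ bounding the number of irreducible components of every such fibre intersected with $D_2$.

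Next, for an arbitrary non-empty $A\subset D_1$, Proposition~\ref{prop:fin2} furnishes $(a_1,\dots,a_L)\in A^L$ with
\[
\Bigl(\bigcap_{a\in A}S_a\Bigr)\cap D_2\ =\ S_{a_1}\cap\cdots\cap S_{a_L}\cap D_2.
\]
Thus the fibre of $\pi_{\lb}|_{(\bigcap_{a\in A}S_a)\cap D_2}$ over any $c\in\C^d$ coincides with the fibre analysed in the previous step, and hence has at most $N_{\lb}$ irreducible components. Setting $\kp:=\max N_{\lb}$, where the maximum ranges over the finite index set $\bigcup_{1\leq d<n}\Lambda(d,n)$, produces the required uniform bound.

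I do not anticipate a serious obstacle. The only point that needs care is the verification that $\tilde S_{\lb}$ satisfies the hypotheses of Lemma~\ref{lem:fin1}, namely subanalyticity of the ambient set and complex analyticity of the fibres over the parameter polydisc. Both reduce to standard closure properties of the classes of subanalytic and complex analytic sets, together with the hypothesis on $S$ already exploited in Lemma~\ref{lem:fin1}.
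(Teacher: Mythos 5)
Your argument is correct and follows essentially the same route as the paper: reduce $\bigcap_{a\in A}S_a\cap D_2$ to an $L$-fold intersection via Proposition~\ref{prop:fin2}, then apply Lemma~\ref{lem:fin1} to the family parametrized by $(a_1,\dots,a_L)$ together with the $\pi_{\lb}$-base point, and take the maximum over the finitely many $\lb$. The only cosmetic difference is that you impose the constraint $\pi_{\lb}(b)=c$ with $b\in\C^n$ as the fibre variable, whereas the paper makes $z_{\lb}$ itself part of the parameter tuple and uses the remaining variables $z_{\mu}$ as fibre coordinates; the two formulations are interchangeable.
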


\begin{proof}
Use Proposition~\ref{prop:fin2} to replace $\bigcap _{a\in A} S_a \cap D_2$ by some $S_{a_1}\cap \cdots \cap S_{a_L}$ and then apply Lemma~\ref{lem:fin1} to the sets
\[
\{(a_1,\cdots, a_L, z_{\lb},z_{\mu})\in (D_1^L \times \C ^d) \times \C^{n-d} : z\in S_{a_1}\cap \cdots \cap S_{a_L}\cap D_2 \}.
\]
\end{proof}

\section{Proofs of the main theorems}
\label{sec:main-proofs}
 
We first prove Theorem~\ref{t:2}, from which the semianaliticity in Theorem~\ref{t:1} will follow.

\subsection{Proof of Theorem~\ref{t:2}}

Fix $d\geq1$. We give the proof of Theorem~\ref{t:2} for this given dimension.

\smallskip

({\it i\,})\,$\Longrightarrow$({\it ii\,}). Let $p\in\Ad$ be an arbitrary point, and let $U_1$ and $U_2$ be neighbourhoods
of $p$ as defined in Section~\ref{s:2}. Then there exists a complex analytic set $Y$ in a 
neighbourhood of $p$, of dimension $d$, which is contained in $X$ and passes through $p$. We may assume 
that $Y$ is irreducible, and hence, by Lemma~\ref{lem:dm2}\,(2), 
$\displaystyle{\bigcap_{z\in Y\cap U_1} S_z}$ contains $Y\cap U_1$. 

Let $(z_1,\dots,z_n)$ be the coordinates in $\C^n$. We will show that, for every $\ve>0$ and $\kappa>0$, 
there exists $\lb\in\Lambda(d,n)$ for which there is a $\kp$-grid $\GG^{\kp}_{\lb}$ with 
$d$-dimensional base $z_{\lb}$ such 
that $\GG^{\kp}_{\lb}\subset \mathbb B(p,\ve)$.

Fix $\ve>0$. By Remark~\ref{rem:irreducible}\,(4) there are a small polydisc 
$D\subseteq \mathbb B (p,\ve)\cap U_1$ such that 
$Y\cap D$ is a complex manifold, and $\lb=(\lb_1,\dots,\lb_d)\in\Lambda(d,n)$ such that $Y\cap D$ is the 
graph of a holomorphic mapping in variables $z_{\lb}$. In particular, any set 
\begin{multline*}
\{z_\nu \in \pi_{\lambda} (D), \nu = (\nu _1, \ldots , \nu _d)\in \{1,\ldots, \kp+1\}^d :\\ 
{\rm \ for \  all\ } \nu,\nu',j, \  
\nu_j = \nu'_j \Leftrightarrow \pi_{\lb_j}(z_{\nu})=\pi_{\lb_j}(z_{\nu'}) \}
\end{multline*}
is pulled back by $\pi_{\lb}|_{Y\cap D}$ to a set 
$$
\GG^{\kp}_{\lb}=\left\{p_\nu: \nu=(\nu_1,\dots,\nu_d)\in\{1,\dots,\kp+1\}^d\right\}
$$ 
satisfying (b) of Definition~\ref{def:grid} ($\pi_\lb$ and $\pi_{\lb_j}$ are as in~\eqref{e:proj}). But as 
noted earlier, $$\bigcap_{z\in Y\cap U_1} S_z \supset Y\cap U_1,$$ which shows that  $\GG^{\kp}_{\lb}$ 
also satisfies (a) of Definition~\ref{def:grid}.

\medskip

({\it ii\,})\,$\Longrightarrow$({\it i\,}). Let $q\in X\cap V$ be arbitrary, and let again, 
$U_1$ and $U_2$ be neighbourhoods
of $q$ as defined in Section~\ref{s:2}. Let $\kp\geq 1$ be an upper bound for the number of irreducible 
components of any fibre of $\bigcap _{z\in Z} S_z \cap U_2$ for any projection $\pi_{\lb}$, $\lb \in \Lambda(d,n)$, 
as $Z$ ranges over the subsets of $U_1$. Corollary \ref{rem:fin3} applied to the set
$\{(a,\overline b)\in \C^n \times \C ^n : \vr(a, \overline b)=0 \}$ insures that this upper bound is finite.

Let $p\in U_1\cap X$. Suppose that for any $\ve>0$, there exists a $\kp$-grid with $d$-dimensional base $z_{\lb}$ 
for some $\lb=(\lb_1,\dots,\lb_d)$,
\[
\GG^{\kp}_{\lb}=\left\{p_\nu: \nu=(\nu_1,\dots,\nu_d)\in\{1,\dots,\kp+1\}^d\right\}
\]
contained in $\mathbb B (p,\ve)$. Without loss of generality we may assume that the open 
$\ve$-ball $\mathbb B(p,\ve)$ is contained in $U_1$. 

Let $Y^1=\bigcap_{z\in\GG^{\kp}_{\lb}} S_z$ and $Y^2=\bigcap_{z\in Y^1 \cap U_1} S_z$. 
By Lemma~\ref{lem:dm2}\,(1), $\GG^{\kp}_{\lb}\subset Y^2$; 
moreover $Y^2 \subset X$, by Definition~\ref{def:grid}\,(a) and Lemma~\ref{lem:dm2}\,(3).

For $\lb$ as above, we denote by $\lb ^{(\delta)}$ the $\delta$-tuple $(\lb_1,\cdots,\lb_{\delta})\in 
\Lambda (\delta,n)$ of the first $\delta$ components of~$\lb$, $\delta\in \{1,\ldots, d \}$.
We will consider the fibres $\pi^{-1}_{\lb^{(\delta)}} \left(\pi_{\lb^{(\delta)}} (p_{\nu})\right)$ at points $p_\nu\in\GG^{\kp}_{\lb}$, with the convention that $\pi^{-1}_{\lb^{(0)}}(\pi_{\lb^{(0)}}(p_{\nu}))=V$.

Let us prove by descending induction on $\delta\in \{0, \ldots, d\}$ that for each 
$p_{\nu}\in \GG^{\kp}_{\lb}$ the fibre 
$$
\pi^{-1}_{\lb^{(\delta)}} \left(\pi_{\lb^{(\delta)}} (p_{\nu})\right)\cap Y^2
$$ 
contains an irreducible component of dimension $\geq d-\delta$ that passes through a $p_{\nu'} \in \GG^{\kp}_{\lb}$ with 
$\pi_{\lb^{(\delta)}}(p_{\nu})=\pi_{\lb^{(\delta)}}(p_{\nu'})$ (the latter equality being vacuously true if 
$\delta=0$).

$\bullet$ For $\delta=d$, it suffices to take any irreducible component of 
$\pi^{-1}_{\lb} (\pi_{\lb} (p_{\nu}))\cap Y^2$ passing through $p_{\nu}$ (which exists 
since  $p_{\nu}\in Y^2$).

$\bullet$ Suppose the result holds for $\delta +1$. Then the collection of subsets of $V$
\[
\left\{ \pi^{-1}_{\lb^{(\delta+1)}} (\pi_{\lb^{(\delta +1)}} (p_{\mu}))\cap Y^2: 
p_{\mu}\in \GG^{\kp}_{\lb} , \ \pi_{\lb^{(\delta)}} (p_{\mu})=
\pi_{\lb^{(\delta)}} (p_{\nu}) \right\}
\]
has $\kappa+1$ pairwise disjoints elements (one for each $\pi_{\lb^{(\delta +1)}} (p_{\mu})$), each 
containing an irreducible component of dimension $\geq d-(\delta+1)$ and each contained in 
$\pi^{-1}_{\lb^{(\delta)}} (\pi_{\lb^{(\delta)}} (p_{\nu}))\cap Y^2 $. By definition of $\kappa$ and the pigeonhole principle, there is an irreducible component 
$X_{\nu}$ of $\pi^{-1}_{\lb^{(\delta)}} (\pi_{\lb^{(\delta)}} (p_{\nu}))\cap Y^2 $ and two indices 
$\mu$ and $\mu '$ such that 
$\pi_{\lb^{(\delta +1)}} (p_{\mu}) \neq \pi_{\lb^{(\delta +1)}} (p_{\mu'})$, and there is an irreducible component $X_{\mu}$  (resp. $X_{\mu'}$) of  
$ \pi^{-1}_{\lb^{(\delta+1)}} (\pi_{\lb^{(\delta +1)}} (p_{\mu}))$ 
(resp. of  $ \pi^{-1}_{\lb^{(\delta+1)}} (\pi_{\lb^{(\delta +1)}} (p_{\mu'}))$) of dimension 
$\geq d-(\delta+1)$ with
\[
X_{\mu} \subset X_{\nu} \mbox{ and } X_{\mu'} \subset X_{\nu}.
\]

Since $X_{\mu}\cap X_{\mu'}=\varnothing$, we get $\dim X_{\nu}\geq d-\delta$, for else $X_{\nu}$ would be the union of proper analytic subsets 
$X_{\mu}$, $ X_{\mu'}$ and $\overline{X_{\nu}\setminus(X_{\mu}\cup X_{\mu'})}$, with $\dim X_{\mu}=\dim X_{\mu'}=\dim X_{\nu}$, contradicting irreducibility of $X_{\nu}$ 
(Remark~\ref{rem:irreducible}\,(2)).

The case $\delta=0$ of the induction provides a point $p_{\nu '} \in \mathbb B(p,\ve) \cap \Ad$. Therefore, 
$p$ is an accumulation point of $\Ad$, and hence $p\in\Ad$, by Proposition~\ref{prop:closed}.

\smallskip

Finally, for any point $q\in V$, there is a pair of neighbourhoods 
$U^q_1\Subset U^q_2\Subset V$ such that for every $w\in U^q_1$, $S_w$ is a complex analytic subset 
of $U^q_2$ of dimension at least $n-1$ (cf. Section~\ref{s:2}). Since $V$ is relatively compact in the domain of 
convergence of $\vr$, the set $X\cap V$ can be covered by a finite collection of open sets 
$U^{q_\alpha}_1$, $\alpha=1,\dots, N$.
Taking the maximum value among the $\kappa$ associated to each $U^{q_\alpha}_2$ 
will give the uniform $\kappa$, as claimed in Theorem~\ref{t:2}.

\qed

\subsection{Proof of Theorem~\ref{t:1}}

Theorem~\ref{t:2} gives us a description of $\Ad$, $d\geq1$, as a subanalytic set. This
description will be shown to actually define a semianalytic set which will prove Theorem \ref{t:1}.

Let $p\in X$ be arbitrary. Let $\vr(z,\overline z)$ be any defining function of $X$ given by a
convergent power series in a polydisc neighbourhood $V$ of $p$. Let $\kp$ be as in Theorem~\ref{t:2}. 
Define
\[
\Sigma_1=\left\{(z_1,\dots, z_{\kp+1})\in V^{\kp+1} :\ \vr(z_\mu, \overline z_\nu)=0,\ \ 1 \le \mu,\nu \le \kp+1 \right\}\,.
\]
Then $\Sigma_1$ is a real analytic subset of $V^{\kp+1}$. Let 
$\Delta_1=\{(z_1,\dots,z_{\kp+1})\in(\C^{n})^{\kp+1}: z_1=\dots=z_{\kp+1}\}$, and consider the set
\[
S_1=\overline{\Sigma_1 \setminus \{(z_1,\dots,z_{\kp+1})\in V^{\kp+1} :\ 
z_\nu = z_{\nu'}\ {\rm \ for\ some\ }\nu \ne \nu'\}}\;\cap\; \Delta_1\,.
\]
The closure of a semianalytic set being semianalytic, $S_1$ is a semianalytic subset of the diagonal  $\Delta_1$. 
One easily checks that the projection to the first coordinate of a semianalytic subset of the diagonal is itself 
semianalytic. But $\mathcal A^1\cap V$ is precisely the projection of $S_1$ to the first coordinate, by Theorem~\ref{t:2}.
\smallskip

Similarly, for $d\geq2$, define
\begin{multline*}
\Sigma_d=\left\{(z_{1,\dots,1},\dots,z_{\kp+1,\dots,\kp+1})\in V^{(\kp+1)^d}: \right.\\
\left. \vr(z_\nu,\overline z_{\nu'})=0,\ \ \nu,\nu'\in \{1 ,\ldots ,\kappa+1 \}^d \right\} \, ,
\end{multline*}
and for every $\lb=(\lb_1,\dots,\lb_d)\in\Lambda(d,n)$, put
\begin{multline*}
\Theta^d_{\lb}=\left\{(z_{1,\dots,1},\dots,z_{\kp+1,\dots,\kp+1})\in V^{(\kp+1)^d}:\right. \ \mathrm{for\ all\ }j\in \{1,\dots,d\} \\ 
\left.\mbox{ and }  (\nu , \nu') \in (\{ 1,\ldots \kappa +1\}^d)^2\ , 
 \pi_{\lb_j}(z_{\nu})=\pi_{\lb_j}(z_{\nu'}) \Leftrightarrow \nu_j = \nu'_j \right\}.
\end{multline*}
Then $\Sigma_d\,\cap\,\bigcup_{\lb\in\Lambda(d,n)}\Theta^d_{\lb}$ is a semianalytic subset of 
$V^{(\kp+1)^d}$.  
Let 
\[
\Delta_d=\{(z_{1,\dots,1},\dots,z_{\kp+1,\dots,\kp+1})\in V^{(\kp+1)^d}: 
z_{1,\dots,1}=\dots=z_{\kp+1,\dots,\kp+1}\}
\]
and consider the set
\begin{multline*}
S_d=\left. \overline{(\Sigma_d\,\cap\,\bigcup_{\lb\in\Lambda(d,n)}\Theta^d_{\lb}) \setminus 
\{(z_{1,\dots,1},\dots,z_{\kp+1,\dots,\kp+1})\in V^{(\kp+1)^d}:\ } \right.
\\ \left. \overline{ \ z_\nu = z_{\nu'}\ {\rm \ for\ some\ }\nu \ne \nu'\}}\;\cap\; \Delta_d\right. \,. 
\end{multline*}
As above, $S_d$ is a semianalytic subset of the diagonal $\Delta_d$, and hence its projection to the first coordinate, 
which is precisely $\Ad\cap V$ (by Theorem~\ref{t:2}), is itself semianalytic.

Finally, suppose that $X$ is real algebraic. Then $\vr$ is a polynomial, and hence the sets $\Sigma_d$ 
above are all semialgebraic. 
It follows that the $\Ad$ are semialgebraic, for all $d\in\N$, which completes the proof of Theorem~\ref{t:1}.
\qed

\section{Appendix: Points of infinite type}
\label{sec:inf-type}

In this section we review the basics of D'Angelo's theory of points of finite type. Let, as before, $X$ denote a closed real analytic subset of an open set in $\C^n$. Our goal is to clarify the definition of type in the case that $X$ is not a smooth hypersurface, and to give a condensed but self-contained proof of the fact that the subset of $X$ of points of infinite type coincides with $\mathcal A^1$ (cf. \cite[\S\,3.3.3, Thm.\,4]{da2}). We were motivated, in part, by the claims of incompleteness of the D'Angelo argument (see \cite{dm}). All the proofs presented in this section (modulo minor technical modifications) originate in D'Angelo \cite{da1} and \cite{da2}.

\subsection{Order of contact of a holomorphic ideal}

Let $\OO_p={_n\OO_p}$ denote the ring of germs of holomorphic functions at a point $p=(p_1,\dots,p_n)\in\C^n$. 
By the Taylor expansion isomorphism, we may identify $_n\OO_p$ with the ring $\C\{z-p\}$ of convergent power series in $z-p$, where $z=(z_1,\dots,z_n)$ is a system of $n$ complex variables. 
Let $\mm_p$ denote the maximal ideal of the local ring $_n\OO_p$. Let $\Hp$ denote the set of germs of 
(non-constant) holomorphic mappings from a neighbourhood of $0$ in $\C$ to a neighbourhood of $p$ in $\C^n$ 
(sending $0$ to $p$).
Given $f=(f_1,\dots,f_n)\in\C\{\zeta\}^n$, we denote by $\nu(f)$ the order of vanishing of $f$ at $0$; i.e., $\nu(f):=\max\{k\in\N:f_j\in\mm^k, j=1,\dots,n\}$ if $f\neq0$ in $\C\{\zeta\}^n$, and $\nu(0):=\infty$, where $\mm$ is the maximal ideal of $\C\{\zeta\}$.

\begin{definition}[{\cite[Def.\,2.6]{da1}}]
\label{def:orders}
Given a proper ideal $I$ in $\OO_p$, define
\begin{align*}
\tau^*(I)&=\sup_{\gamma\in\Hp}\ \inf_{g\in I}\ \frac{\nu(g\circ\gamma)}{\nu(\gamma)}\,;\\
K(I)&=\inf\{k\in\N:\mm_p^k\subset I\}\,;\\
D(I)&=\dim_{\C}(\OO_p/I)\qquad\mathrm{(as\ a\ complex\ vector\ space)}.
\end{align*}
\end{definition}

The following is a simplified variant of \cite[Thm.\,2.7]{da1}.

\begin{lemma}
\label{lem:ineq1}
Suppose that $I$ is a proper ideal in $\OO_p$. Then
\[
\tau^*(I)\ \leq\ K(I)\ \leq\ D(I)\,.
\]
Moreover, each of the above constants is finite iff the zero-set germ of $I$ is the singleton $\{p\}$.
\end{lemma}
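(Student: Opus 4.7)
The plan is to establish the two inequalities separately, then deduce the finiteness equivalence by combining them with the two residual implications ``$V(I)_p=\{p\}\Rightarrow D(I)<\infty$'' and ``$\tau^*(I)<\infty\Rightarrow V(I)_p=\{p\}$''.

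For $\tau^*(I)\leq K(I)$, I would suppose $K(I)=k<\infty$, so $\mm_p^k\subset I$. Given any $\gamma\in\Hp$, pick an index $j_0$ realizing $\nu(\gamma)=\min_j\nu(\gamma_j-p_j)$, and set $g:=(z_{j_0}-p_{j_0})^k$. Then $g\in I$ and $g\circ\gamma=(\gamma_{j_0}-p_{j_0})^k$, giving $\nu(g\circ\gamma)=k\,\nu(\gamma)$; hence $\inf_{g'\in I}\nu(g'\circ\gamma)/\nu(\gamma)\leq k$, and taking the supremum over $\gamma$ yields $\tau^*(I)\leq K(I)$.

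For $K(I)\leq D(I)$, I would consider the descending chain of $\C$-subspaces $V_k:=(\mm_p^k+I)/I$ of $\OO_p/I$, noting that $\dim_\C V_0=D(I)$ and $V_{k+1}=\mm'V_k$, where $\mm':=(\mm_p+I)/I$ is the maximal ideal of the local Noetherian ring $\OO_p/I$. An equality $V_k=V_{k+1}$ at some step would give $V_k=\mm'V_k$, and Nakayama's lemma applied to the finitely generated $(\OO_p/I)$-module $V_k$ would force $V_k=0$. Consequently the chain strictly decreases in dimension while nonzero, so $V_{D(I)}=0$; i.e., $\mm_p^{D(I)}\subset I$, proving $K(I)\leq D(I)$.

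For the finiteness equivalence, the two proved inequalities reduce matters to (a) $V(I)_p=\{p\}\Rightarrow D(I)<\infty$, and (b) $\tau^*(I)<\infty\Rightarrow V(I)_p=\{p\}$. Part (a) follows from the analytic Nullstellensatz: $V(I)_p=\{p\}$ yields $\sqrt I=\mm_p$, so each generator $z_j-p_j$ of $\mm_p$ has a power in $I$, hence $\mm_p^N\subset I$ for some $N$, making $\OO_p/I$ a quotient of the finite-dimensional $\OO_p/\mm_p^N$. For (b), I argue by contrapositive: if $V(I)_p\neq\{p\}$, an irreducible component of $V(I)_p$ has positive dimension at $p$, and then by the Puiseux-type parametrization recalled in Remark~\ref{rem:irreducible}\,(4) (cf.\ the argument of Lemma~\ref{lem:dm1}) one obtains a non-constant $\gamma\in\Hp$ whose image lies in $V(I)$; for every $g\in I$ one has $g\circ\gamma\equiv0$, so $\nu(g\circ\gamma)=\infty$ and $\tau^*(I)=\infty$. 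The main obstacle is the $K(I)\leq D(I)$ step, where the quantitative bound requires isolating the Nakayama collapse ``$V_k=\mm'V_k\Rightarrow V_k=0$'' rather than invoking Krull's intersection theorem, which would yield only the qualitative conclusion $K(I)<\infty$.
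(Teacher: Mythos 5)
Your proposal is correct, and two of its three parts run along the same lines as the paper: the bound $\tau^*(I)\leq K(I)$ via a degree-$k$ monomial (the paper phrases this as $\tau^*(I)\leq\tau^*(\mm_p^k)=k$, using that $\mm_p^k$ is generated by monomials of degree $k$, which is exactly your choice of $(z_{j_0}-p_{j_0})^k$ for the coordinate of minimal order), and the finiteness equivalence via the Nullstellensatz in one direction and a Puiseux parametrization of a positive-dimensional branch of the zero set in the other. The genuine difference is in the middle inequality $K(I)\leq D(I)$. The paper argues combinatorially: if $\mm_p^k\not\subset I$, pick a monomial $(z-p)^{\beta}$ of degree $k$ outside $I$ and observe that its divisors are also outside $I$, which yields (via a chain of divisors $1,(z-p)^{\alpha^{(1)}},\dots,(z-p)^{\beta}$ whose classes are linearly independent mod $I$, since a nontrivial relation would force the lowest monomial in the chain into $I$ after factoring out a unit) at least $k+1$ independent elements of $\OO_p/I$, so $D(I)\geq k+1$. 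You instead filter $\OO_p/I$ by the subspaces $V_k=(\mm_p^k+I)/I$, note $V_{k+1}=\mm'V_k$, and use Nakayama to force strict decrease of dimension until $V_{D(I)}=0$, giving $\mm_p^{D(I)}\subset I$. Both are valid and give the same quantitative bound; the paper's argument is more elementary and self-contained (no appeal to Nakayama or to finite generation of the ideals $\mm_p^k+I$), while yours is the standard commutative-algebra argument and arguably more transparent, and as you note it isolates exactly the collapse step needed to keep the bound quantitative rather than merely concluding $K(I)<\infty$.
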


\begin{proof}
Let $\VV(I)$ denote the zero-set germ of $I$.
By the complex analytic Nullstellensatz (see, e.g., \cite[Ch.\,3, \S\,4.1]{loj}), $\VV(I)=\{p\}$ if and only if $\sqrt{I}=\mm_p$, or equivalently (by Noetherianity of $\OO_p$), $I$ contains a power of the maximal ideal $\mm_p$. Hence $\VV(I)$ equals $\{p\}$ precisely when both $K(I)$ and $D(I)$ are finite. On the other hand, $\VV(I)\varsupsetneq\{p\}$ if and only if there exists a $1$-dimensional irreducible complex-analytic germ $Y_p$ at $p$ such that every $g\in I$ vanishes on $Y_p$. Choosing $\gamma\in\Hp$ the Puiseux parametrization of $Y_p$ (see \cite[Ch.\,II, \S\,6.2]{loj}), we see that the latter is equivalent to $g\circ\gamma=0$ for every $g\in I$, that is, $\tau^*(I)=\infty$.

Assume then that $\VV(I)=\{p\}$, or equivalently, that $I$ contains a power of the maximal ideal $\mm_p$.
Observe that $I\subset J$ implies $\tau^*(I)\geq\tau^*(J)$. Hence, if $I\supset\mm_p^k$, then $\tau^*(I)\leq\tau^*(\mm_p^k)$. The inequality $\tau^*(I)\leq K(I)$ thus follows from the fact that $\tau^*(\mm_p^K)=K$ (as $\mm_p^K$ can be generated by monomials, all of degree $K$).

Suppose now that $\mm_p^k\not\subset I$. Then there is a multi-index $\beta\in\N^n$ of length $|\beta|=k$, such that $(z-p)^{\beta}\notin I$. It follows that $(z-p)^{\alpha}\notin I$ for every $\alpha=(\alpha_1,\dots,\alpha_n)\in\N^n$ satisfying $\alpha_j\leq\beta_j$, $j=1,\dots,n$. Since there is at least $|\beta|+1=k+1$ of such $\alpha$'s, then $\OO_p/I$ contains at least $k+1$ elements linearly independent over $\C$. This proves the inequality $K(I)\leq D(I)$.
\end{proof}

\subsection{The type of a real analytic principal ideal}

Let $\OO^{\R}_p={_n\OO^{\R}_p}$ denote the ring of real-valued real analytic germs at a point $p=(p_1,\dots,p_n)\in\C^n$.
Let $\displaystyle{\vr(z,\bar{z})=\sum_{\alpha,\beta\in\N^n}c_{\alpha\beta}(z-p)^{\alpha}(\overline{z-p})^{\beta}}$ be a power series representation of $\vr(z,\bar{z})\in\OO^{\R}_p$, convergent in an open neighbourhood of $p$ in $\C^n$. We define the \emph{type of $\vr$ at $p$} as
\begin{equation}
\label{eq:type}
\D(\vr,p)=\sup_{\gamma\in\Hp}\,\frac{\nu(\vr\circ\gamma)}{\nu(\gamma)}\,,
\end{equation}
where the order of vanishing is taken with respect to the maximal ideal 
$(\mathrm{Re}(\zeta),\mathrm{Im}(\zeta))$ of the ring $\R\{\mathrm{Re}(\zeta),\mathrm{Im}(\zeta)\}$ of real analytic germs at $0$ in $\C\cong\R^2$.
It is readily seen that $\D(u\!\cdot\!\vr,p)=\D(\vr,p)$ for any invertible $u\in\OO^{\R}_p$. Hence, since $\OO^{\R}_p$ is a UFD, we may speak of the \emph{type $\D(I,p)$ of a principal ideal $I=(\vr)$} in $\OO^{\R}_p$.

Let $X$ be a smooth real analytic hypersurface in an open neighbourhood $U$ of a point $p$ in $\C^n$. Then, after shrinking $U$ if necessary, there is a unique (up to multiplication by an invertible $u\in\OO^{\R}_p$) real analytic $\vr\in\OO^{\R}_p$ with $d\vr(p)\neq0$ and $X=\{z\in U:\vr(z,\bar{z})=0\}$. One defines (see \cite[Def.\,2.16]{da1}, \cite[\S\,3.3.3]{da2}) the \emph{type of $X$ at $p$} as $\D(X,p):=\D(\vr,p)$.
However, the type of a real analytic set $X$ is not well-defined if $X$ is not a hypersurface. Indeed, if the real codimension of $X$ at $p$ is greater than $1$, there is no canonical choice of a single defining function, and given two distinct defining functions $\vr_1, \vr_2$ for $X$ in a neighbourhood of $p$ there need not exist an invertible $u$ with $\vr_2=u\cdot\vr_1$. Consequently, the family of ideals $I(\vr,U,p)$ associated to $X_p$ (see below) is not an invariant of $X_p$, but only of the principal ideal $(\vr)\cdot\OO^{\R}_p$. (Thus D'Angelo's \cite[\S\,3.3.2, Prop.\,5]{da2} only applies to smooth real hypersurfaces.)
Nonetheless, we can state the following:

\begin{definition}
\label{def:fin-type}
Let $X$ be a closed real analytic subset of an open set in $\C^n$, and let $\vr(z,\bar{z})$ be any real analytic function in a neighbourhood $U$ of a point $p\in X$ satisfying $X\cap U=\{z\in U:\vr(z,\bar{z})=0\}$. We say that $p$ is a \emph{point of finite type of $X$}, when $\D(\vr,p)<\infty$. Otherwise, $p$ is called a \emph{point of infinite type of $X$}.
\end{definition}

\begin{remark}
\label{rem:well-def-type}
By Proposition \ref{prop:inf-type} below, the notion of a point of finite type is well-defined; i.e., independent of the choice of a defining function. Indeed, if $\vr_1$ and $\vr_2$ are two real analytic functions defining $X$ in a neighbourhood of a point $p\in X$, then $\D(\vr_1,p)=\infty$ iff $\D(\vr_2,p)=\infty$, because both equalities are equivalent to $X_p$ containing a positive-dimensional complex analytic germ.
\end{remark}

\subsection{Holomorphic decomposition}

Consider $\displaystyle{\vr(z,\bar{z})=\sum_{|\alpha|+|\beta|\geq1}c_{\alpha\beta}(z-p)^{\alpha}(\overline{z-p})^{\beta}}$
a real analytic function vanishing at $p$, with the power series convergent in the polydisc $\{z:|z_j-p_j|<\delta_j\}$. Let $\delta=(\delta_1,\dots,\delta_n)$, and let $0<t<1$. One can associate to $\vr$ functions
\begin{align*}
h(z)&=4\sum_{|\alpha|\geq1}c_{\alpha0}(z-p)^{\alpha}\,,\\
f^{\beta}(z)&=\sum_{|\alpha|\geq1}c_{\alpha\beta}(t\delta)^{\beta}(z-p)^{\alpha}+(z-p)^{\beta}(t\delta)^{-\beta}\,,\\
g^{\beta}(z)&=\sum_{|\alpha|\geq1}c_{\alpha\beta}(t\delta)^{\beta}(z-p)^{\alpha}-(z-p)^{\beta}(t\delta)^{-\beta}\,,
\end{align*}
for all $\beta\in\N^n$, $|\beta|\geq1$. It is easy to see that $h(z)$ and all the $f^{\beta}(z)$, $g^{\beta}(z)$ are holomorphic in the polydisc 
$\{z:|z_j-p_j|<t\delta_j\}$, and that 
$\left\|f(z)\right\|^2=\sum_{|\beta|\geq1}|f^{\beta}(z)|^2$,  $\left\|g(z)\right\|^2=\sum_{|\beta|\geq1}|g^{\beta}(z)|^2$ 
are real analytic in the same polydisc. One may thus consider $f=(f^{\beta})_{|\beta|\geq1}$ and $g=(g^{\beta})_{|\beta|\geq1}$ 
as holomorphic functions with values in the Hilbert space $l^2$.
Moreover, $\vr$ admits a \emph{holomorphic decomposition} of the form
\begin{equation}
\label{eq:holo-decomp}
4\vr(z,\bar{z})=2\mathrm{Re}(h(z))+\left\|f(z)\right\|^2-\left\|g(z)\right\|^2\,.
\end{equation}
For a unitary transformation $U:l^2\to l^2$, consider an ideal $I(\vr,U,p)$ in $\OO_p$ generated by $h(z)$ and by the components 
$\displaystyle{f^{\beta}(z)-\sum_{\sigma\in\N^n}u_{\beta\sigma}g^{\sigma}(z)}$ of $\,f-U(g)$, where $u_{\beta\sigma}$ are the entries of the (matrix of) $U$.

\begin{lemma}[{cf. \cite[Thm.\,3.5]{da1}}]
\label{lem:key-ineq}
The following inequality holds
\[
\D(\vr,p)\ \leq\ 2\,\sup_U\tau^*(I(\vr,U,p))\,,
\]
where the supremum is taken over all unitary transformations $U:l^2\to l^2$.
\end{lemma}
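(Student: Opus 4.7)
The plan is: given $\gamma\in\Hp$ with $k:=\nu(\vr\circ\gamma)$, to construct a unitary $U\colon l^2\to l^2$ (depending on $\gamma$) for which every generator of $I(\vr,U,p)$, composed with $\gamma$, vanishes to order $\geq k/2$. Since any element of $I(\vr,U,p)$ is a finite $\OO_p$-combination of the generators, the same lower bound will then extend to the whole ideal, giving
\[
\tau^*(I(\vr,U,p))\;\geq\;\inf_{\phi\in I(\vr,U,p)}\frac{\nu(\phi\circ\gamma)}{\nu(\gamma)}\;\geq\;\frac{\nu(\vr\circ\gamma)}{2\nu(\gamma)},
\]
and taking $\sup_U$ followed by $\sup_\gamma$ will complete the proof (the case $k=\infty$ is automatic, as the same construction will produce $U$'s with $\tau^*$ arbitrarily large).

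To build $U$, I would start from the Taylor expansions
\[
h\circ\gamma(\zeta)=\sum_{j\geq1}a_j\zeta^j,\quad f\circ\gamma(\zeta)=\sum_{j\geq1}A_j\zeta^j,\quad g\circ\gamma(\zeta)=\sum_{j\geq1}B_j\zeta^j,
\]
where $a_j\in\C$ and $A_j,B_j\in l^2$, and exploit \eqref{eq:holo-decomp}. The norms $\|f\circ\gamma\|^2$ and $\|g\circ\gamma\|^2$ contain only monomials $\zeta^j\bar\zeta^{j'}$ with $j,j'\geq1$, so comparing the pure-$\zeta^m$ coefficients of the two sides of $4(\vr\circ\gamma)=2\mathrm{Re}(h\circ\gamma)+\|f\circ\gamma\|^2-\|g\circ\gamma\|^2$ will force $a_m=0$ for $m<k$, hence $\nu(h\circ\gamma)\geq k$. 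The residual identity $\|f\circ\gamma\|^2-\|g\circ\gamma\|^2=4(\vr\circ\gamma)-2\mathrm{Re}(h\circ\gamma)$ then has order of vanishing $\geq k$, and matching the coefficients of $\zeta^m\bar\zeta^{m'}$ yields
\[
\langle A_m,A_{m'}\rangle=\langle B_m,B_{m'}\rangle\quad\text{for all } m+m'<k.
\]
In particular, with $K:=\lceil k/2\rceil$, the Gram matrices of $(A_1,\dots,A_{K-1})$ and $(B_1,\dots,B_{K-1})$ agree, so the assignment $B_j\mapsto A_j$ ($1\leq j<K$) is well-defined and isometric on the span of $\{B_j\}_{j<K}$. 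That span is finite-dimensional in $l^2$, so both it and its image have infinite-dimensional separable orthogonal complements, and the partial isometry extends to a genuine unitary $U$ of $l^2$. With this $U$ the coefficients $A_j-U(B_j)$ of $(f-Ug)\circ\gamma$ vanish for $j<K$, so each component $(f^\beta-\sum_\sigma u_{\beta\sigma}g^\sigma)\circ\gamma$ will have order $\geq K\geq k/2$, and together with $\nu(h\circ\gamma)\geq k$ this covers every generator of $I(\vr,U,p)$.

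The main obstacle I anticipate is the bookkeeping around orders of vanishing: the holomorphic decomposition \eqref{eq:holo-decomp} mixes $\zeta$ and $\bar\zeta$, so one must carefully separate the purely holomorphic contribution (which controls $\nu(h\circ\gamma)$) from the mixed contribution (which yields the Gram-matrix identity). A secondary technicality is insisting on $U$ being a unitary of all of $l^2$ rather than of some finite truncation, but since only finitely many $B_j$ need to be matched, the extension from a partial isometry to a full unitary is purely formal.
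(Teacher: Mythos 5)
Your proof is correct and follows essentially the same route as the paper: compare coefficients of $4\vr\circ\gamma$ using the decomposition \eqref{eq:holo-decomp} to get $\nu(h\circ\gamma)\geq k$ from the pure terms and the Gram identity $\langle A_m,A_{m'}\rangle=\langle B_m,B_{m'}\rangle$ from the mixed terms, then choose a unitary matching the truncated coefficient sequences so that every generator of $I(\vr,U,p)$ vanishes along $\gamma$ to order at least $k/2$, which gives the factor $2$. The only difference is presentational: where you build the unitary directly from the Gram matrices of the finitely many coefficients $A_j,B_j$, the paper phrases this step in terms of $k$-jets and invokes Lemma~\ref{lem:eq-norms}, whose proof is precisely your Gram-matrix argument (your remark that one needs a unitary between the two orthogonal complements, rather than the identity, is in fact the more careful formulation).
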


\begin{proof}
Suppose that $\gamma\in\Hp$ is such that $\nu(\vr\circ\gamma)>2k$ for some integer $k\geq1$. It suffices to find a unitary $U:l^2\to l^2$ for which $\tau^*(I(\vr,U,p))>k/\nu(\gamma)$. We have $J^{2k}(\vr\circ\gamma)=0$, where, for a germ $f\in\R\{x,y\}$, $J^s(f)$ denotes the \emph{$s$-jet} of $f$, that is, the image of $f$ under the homomorphism $J^s:\R\{x,y\}\to\R\{x,y\}/(x,y)^{s+1}$ of $\R\{x,y\}$-modules.
For simplicity of notation assume $p=0$. Then
\[
\vr(\gamma(\zeta),\overline{\gamma(\zeta)})=(\sum_{|\alpha|\geq1}c_{\alpha 0}\,\gamma(\zeta)^{\alpha} + \sum_{|\beta|\geq1}c_{0 \beta}\,\overline{\gamma(\zeta)}^{\beta})+\sum_{|\alpha|,|\beta|\geq1}c_{\alpha \beta}\,\gamma(\zeta)^{\alpha}\overline{\gamma(\zeta)}^{\beta}\,.
\]
Since the bracket on the right hand side of this equation contains only pure terms and all the other (non-zero) terms contain positive powers of both $\zeta$ and $\bar{\zeta}$, it follows from $J^{2k}(\vr\circ\gamma)=0$ that the $2k$'th jet of the bracket is zero. The content of the bracket is precisely $2\mathrm{Re}(h\circ\gamma)$, hence $J^{2k}(h\circ\gamma)=0$, and consequently $J^{2k}(\left\|f\circ\gamma\right\|^2-\left\|g\circ\gamma\right\|^2)=0$, by \eqref{eq:holo-decomp}. One checks by direct computation that the latter implies $\left\|J^k(f\circ\gamma)\right\|^2\ =\ \left\|J^k(g\circ\gamma)\right\|^2$.
Then, by Lemma~\ref{lem:eq-norms} below, there is a unitary $U:l^2\to l^2$ such that $J^k(f\circ\gamma)-U(J^k(g\circ\gamma))=0$. Since $J^k(f\circ\gamma)-U(J^k(g\circ\gamma))=J^k[(f-U(g))\circ\gamma]$, it follows that $\nu((f^{\beta}-\sum_{\sigma\in\N^n}u_{\beta\sigma}g^{\sigma})\circ\gamma)>k$ for all $|\beta|\geq1$. Therefore $\nu(F\circ\gamma)>k$ for every generator $F$ of $I(\vr,U,p)$, which proves $\tau^*(I(\vr,U,p))> k/\nu(\gamma)$.
\end{proof}

\begin{lemma}[{cf. \cite[\S\,3.3.1, Prop.\,4]{da2}}]
\label{lem:eq-norms}
Let $F,G:B\to l^2$ be holomorphic mappings on an open ball in $\C^q$, with $\left\|F\right\|^2=\left\|G\right\|^2$. Suppose there exists $k\in\N$ such that all the components of $F$ and $G$ are polynomials of degree at most $k$. Then there is a unitary operator $U:l^2\to l^2$ satisfying $F=U(G)$.
\end{lemma}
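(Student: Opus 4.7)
The plan is to reduce the statement to an equality of Gram matrices of Taylor coefficients, and then build $U$ by piecing together an isometry between closed spans.

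First, I would expand $F$ and $G$ as $\ell^2$-valued power series centered at some point of $B$, say $F(z)=\sum_{\alpha\in\N^q}A_\alpha(z-p_0)^\alpha$ and $G(z)=\sum_{\alpha}B_\alpha(z-p_0)^\alpha$, where $A_\alpha,B_\alpha\in\ell^2$. The hypothesis $\|F(z)\|^2=\|G(z)\|^2$, rewritten via
\[
\sum_{\alpha,\beta}\langle A_\alpha,A_\beta\rangle\,(z-p_0)^\alpha\,\overline{(z-p_0)^\beta}\ =\ \sum_{\alpha,\beta}\langle B_\alpha,B_\beta\rangle\,(z-p_0)^\alpha\,\overline{(z-p_0)^\beta},
\]
yields (by uniqueness of the power-series expansion in $z$ and $\bar z$) the identities $\langle A_\alpha,A_\beta\rangle=\langle B_\alpha,B_\beta\rangle$ for all multi-indices $\alpha,\beta$.

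Next I would use this equality of Gram matrices to define an isometry. On the linear span $W_0:=\mathrm{span}\{B_\alpha\}\subset\ell^2$, the assignment $B_\alpha\mapsto A_\alpha$ (extended by linearity) is well-defined and norm-preserving: a finite linear combination $\sum c_\alpha B_\alpha$ has the same norm as $\sum c_\alpha A_\alpha$, by the Gram-matrix identity. This isometry extends by continuity to an isometric isomorphism $U_0$ from the closed span $W:=\overline{W_0}$ onto $V:=\overline{\mathrm{span}\{A_\alpha\}}$.

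Finally, I would extend $U_0$ to a unitary $U:\ell^2\to\ell^2$. Since $V$ and $W$ are isometric, their orthogonal complements $V^{\perp}$ and $W^{\perp}$ in $\ell^2$ can be matched by a unitary $U_1:W^{\perp}\to V^{\perp}$; set $U:=U_0\oplus U_1$. Then, because the series $G(z)=\sum_\alpha B_\alpha(z-p_0)^\alpha$ converges absolutely in $\ell^2$ and $U$ is continuous, we get
\[
U(G(z))=\sum_{\alpha}U(B_\alpha)(z-p_0)^\alpha=\sum_{\alpha}A_\alpha(z-p_0)^\alpha=F(z),
\]
as required. The only delicate point is ensuring that $U_1$ exists as a unitary, i.e.\ that $\dim V^{\perp}=\dim W^{\perp}$; this is harmless in the intended application of Lemma~\ref{lem:key-ineq}, where one applies the lemma to jets and the relevant spans are finite-dimensional (so both complements are countably infinite-dimensional), and in the general case one can either enlarge $\ell^2$ by an auxiliary copy of $\ell^2$ or note that an isometric embedding (which is what the use in Lemma~\ref{lem:key-ineq} actually needs) already suffices for the formula $f-U(g)$.
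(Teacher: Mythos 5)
Your proof is correct and follows essentially the same route as the paper: expand $F$ and $G$ in $l^2$-valued power series, equate the Gram matrices of the coefficient families, define an isometry between their (closed) spans, and extend it to a unitary of $l^2$. Your treatment of the extension step is in fact more careful than the paper's, which simply takes $U$ to be the identity on the orthogonal complement of the span; the need to match $V^{\perp}$ with $W^{\perp}$ (automatic in the finite-dimensional jet situation where Lemma~\ref{lem:key-ineq} invokes the lemma, and otherwise handled by enlarging $l^2$ or settling for an isometry, which is all that application requires) is precisely the point being glossed over there.
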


\begin{proof}
Write $F=\sum F_{\alpha}z^{\alpha}$, $G=\sum G_{\alpha}z^{\alpha}$. By expanding and equating the norms squared, one obtains relations 
\begin{equation}
\label{eq:a-b}
(F_{\alpha},F_{\beta})=(G_{\alpha},G_{\beta})
\end{equation}
for all multi-indices $\alpha,\beta$, where $(.,.)$ denotes the inner product in $l^2$.
Since all the components of $F$ and $G$ are polynomials of degree at most $k$, it follows that $\mathrm{span}(F_{\alpha})$ and $\mathrm{span}(G_{\alpha})$ are finite-dimensional vector spaces. Moreover, by \eqref{eq:a-b}, they are of the same dimension. Hence one can define $U:\mathrm{span}(G_{\alpha})\to\mathrm{span}(F_{\alpha})$ by setting $U(G_{\alpha})=F_{\alpha}$ on a maximal linearly independent set.
Then $U$ is a well-defined linear transformation and an isometry from $\mathrm{span}(G_{\alpha})$ to $\mathrm{span}(F_{\alpha})$. By defining $U$ to be an isometry from the orthogonal complement of $\mathrm{span}(G_{\alpha})$ to the orthogonal complement of $\mathrm{span}(F_{\alpha})$, one obtains an operator with the required properties.
\end{proof}

\begin{remark}
\label{rem:mistake}
We are indebted to the anonymous referee for pointing out a mistake in an earlier version of the above lemma.
In fact, the mistake can be traced back to \cite[\S\,3.3.1, Prop.\,4]{da2}, where it is not assumed that $F$ and $G$ are polynomial. As it turns out, without this assumption one cannot guarantee that the dimensions of $\mathrm{span}(F_{\alpha})$ and $\mathrm{span}(G_{\alpha})$ are the same \emph{and} so are the dimensions of their ortogonal complements. This can be seen readily if one sets, for example, $F=(z,z^2,z^3,\dots)$ and $G=(0,z,z^2,z^3,\dots)$.
In the general case, one can still prove that there exists an isometry $U$ (but not necessarily unitary) such that $F\oplus 0=U(G)$ or
$G\oplus 0=U(F)$, where $0$ is a certain (possibly infinite) vector of zeros.
\end{remark}

\subsection{The equivalence}

\begin{proposition}[{cf. \cite[\S\,3.3.3, Thm.\,4]{da2}}]
\label{prop:inf-type}
Let $X$ be a closed real analytic subset of an open set in $\C^n$, defined in a neighbourhood of a point $p\in X$ by the vanishing of a real analytic function $\vr(z,\bar{z})=\sum_{\alpha,\beta}c_{\alpha\beta}\,(z-p)^{\alpha}(\overline{z-p})^{\beta}$. Then $\D(\vr,p)<\infty$ if and only if the germ $X_p$ contains no positive-dimensional complex analytic germ.
\end{proposition}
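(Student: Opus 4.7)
My plan is to prove the two directions separately. For the trivial direction ($\Leftarrow$), I will use the Puiseux theorem: if $X_p$ contains a positive-dimensional complex analytic germ $Y_p$, I pass to a one-dimensional irreducible sub-germ and parametrize it by a non-constant holomorphic $\gamma \in \Hp$. Since $\gamma(\zeta)\in Y_p \subset X_p$, we get $\vr(\gamma(\zeta), \overline{\gamma(\zeta)}) \equiv 0$, hence $\nu(\vr \circ \gamma) = \infty$ and so $\D(\vr, p) = \infty$.

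The converse ($\Rightarrow$) is harder. Assuming $\D(\vr, p) = \infty$, Lemma~\ref{lem:key-ineq} gives $\sup_U \tau^*(I(\vr, U, p)) = \infty$, where the supremum is over unitary operators on $l^2$. My plan is to show this supremum is in fact attained by some unitary $U_0$. Granting this, Lemma~\ref{lem:ineq1} ensures $\VV(I(\vr, U_0, p)) \supsetneq \{p\}$, producing a positive-dimensional irreducible complex analytic germ $Y_p$ on which every generator of $I(\vr, U_0, p)$ vanishes. In particular, $h \equiv 0$ on $Y_p$ (so $\mathrm{Re}\, h \equiv 0$), and $f(z) = U_0(g(z))$ in $l^2$ for every $z \in Y_p$. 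Since $U_0$ is an isometry, $\|f\|^2 \equiv \|g\|^2$ on $Y_p$, and the holomorphic decomposition \eqref{eq:holo-decomp} then forces $4\vr = 2\mathrm{Re}\, h + \|f\|^2 - \|g\|^2 \equiv 0$ on $Y_p$. Hence $Y_p \subset X_p$, providing the desired positive-dimensional germ.

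The main obstacle will be producing $U_0$ via a compactness argument. I would start with $U_m$ such that $\tau^*(I(\vr, U_m, p)) \geq m$, and proceed in four steps: (i) reduce to a finite-dimensional problem for each $k$, by observing that modulo $\mm_p^{k+1}$ the ideal $I(\vr, U, p)$ depends on $U$ only through the induced linear map between the finite-dimensional subspaces of $\OO_p/\mm_p^{k+1}$ spanned by the truncations of $\{g^\sigma\}$ and $\{f^\beta\}$; (ii) note that in this finite-dimensional block the $U_m$ are restrictions of isometries and therefore lie in a compact subset of a finite-dimensional operator space, so a subsequence converges; (iii) use upper semi-continuity of $D_k(U) := \dim_{\C}(\OO_p/(I(\vr, U, p) + \mm_p^{k+1}))$ in $U$ (a limit ideal can only shrink, enlarging the quotient) to transfer the blow-up of $D_k(U_m)$ as $m \to \infty$ (forced by $\tau^*(I(\vr, U_m, p)) \geq m$ via Lemma~\ref{lem:ineq1}) to the limit; (iv) extract a diagonal subsequence over $k$, and extend the resulting limit block-isometries to a single unitary $U_0$ on $l^2$ by completing with the identity on the orthogonal complement, arriving at $K(I(\vr, U_0, p)) = \infty$ and hence $\tau^*(I(\vr, U_0, p)) = \infty$ by Lemma~\ref{lem:ineq1}. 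The delicate part will be making this finite-dimensional reduction coherent across different $k$ and verifying that the limit block-isometries genuinely assemble into a unitary on $l^2$ compatible with the ideal's structure at every truncation level.
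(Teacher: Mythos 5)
Your ($\Leftarrow$) direction is exactly the paper's (Puiseux parametrization of a one-dimensional sub-germ). The gap is in the converse, and it sits precisely at the step you yourself flag as delicate: the claim that $\sup_U\tau^*(I(\vr,U,p))=\infty$ is attained by an honest \emph{unitary} $U_0$. Your compactness scheme cannot produce such a $U_0$. For fixed $k$, the ideal $I(\vr,U,p)$ modulo $\mm_p^{k+1}$ is governed by the action of $U$ on the (finite-dimensional) span of the Taylor coefficient vectors of $g$ up to degree $k$; but these images lie in the infinite-dimensional $l^2$, so the relevant family of maps is not norm-compact, and if you force compactness by compressing the target to a finite-dimensional block, the compressions of unitaries are merely contractions and their limits need not be isometries (think of shift-type unitaries, whose compressions can tend to $0$). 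Consequently the "limit block-isometries" of step (iv) need not be isometries, and in general the limiting block data cannot be realized by \emph{any} unitary: if the coefficient vectors of $g$ and $f$ span $l^2$, a unitary compatible with the limit blocks at every truncation level would have to coincide with the weak limit of the $U_m$, which may be a strictly non-isometric contraction. And if you settle for that contraction limit $U^\infty$, the single relation $f=U^\infty(g)$ on the zero germ $Y_p$ only yields $\|f\|\le\|g\|$, hence $4\vr=2\mathrm{Re}(h)+\|f\|^2-\|g\|^2\le0$ on $Y_p$, which does not place $Y_p$ inside $X$. So the argument as proposed does not close.

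The paper's proof (following Lempert) is built around exactly this obstruction. Before passing to the limit it enlarges the generating set of $I(\vr,U^j,p)$ by the adjoint relations $g-(U^j)^*(f)$ (redundant while $U^j$ is unitary, so the ideal is unchanged), extracts an entrywise-convergent subsequence of the $U^j$ using only boundedness of the matrix entries, and invokes upper semicontinuity of $D(I)=\dim_\C\OO_p/I$ (Tougeron) to conclude $D=\infty$ for the limit ideal; Lemma~\ref{lem:ineq1} then gives a positive-dimensional zero germ $Y_p$. Unitarity of the limit is never claimed: instead, since both $U^\infty$ and $(U^\infty)^*$ have norm at most $1$, on $Y_p$ one gets the two-sided estimate $\|f\|=\|U^\infty(g)\|\le\|g\|=\|(U^\infty)^*(f)\|\le\|f\|$, which is what substitutes for the isometry property you wanted and forces $\vr=0$ on $Y_p$. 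Your steps (i)--(iii) are essentially a truncated version of this semicontinuity argument and are fine in outline; the missing idea is to abandon unitarity of the limit operator and carry the adjoint relation along in the ideal so that the norm equality survives the limit.
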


\begin{proof} We follow the argument of Lempert~\cite{lem}.
Suppose $X_p$ contains a $1$-dimensional complex-analytic germ $Y_p$. Choosing $\gamma\in\Hp$ the Puiseux parametrization of (an irreducible component of) $Y$ at $p$, we get $\vr\circ\gamma=0$, hence $\D(\vr,p)=\infty$.

Conversely, assume that $X_p$ contains no positive-dimensional complex germs and, for a proof by contradiction, suppose that $\D(\vr,p)=\infty$. Then, by Lemma~\ref{lem:key-ineq}, there exists a sequence $(U^{j})_{j\geq1}$ of unitary matrices for which
\begin{equation}
\label{eq:lim-tau}
\lim_{j\to\infty}\tau^*(I(\vr,U^{j},p))=\infty\,.
\end{equation}
Denoting by $(U^{j})^*$ the adjoint of $U^{j}$, we have, for every $j$,
\begin{equation}
\label{eq:Uj}
I(\vr,U^{j},p)\ =\ (h,f-U^{j}(g))\cdot\OO_p\ =\ (h,f-U^{j}(g),(U^{j})^*(f)-g)\cdot\OO_p\,,
\end{equation}
since $(U^j)^*=(U^j)^{-1}$ and ideals in $\OO_p$ are closed in the topology of coefficient-wise convergence (see, e.g., \cite[Thm.\,6.3.5]{hor}).
By \eqref{eq:lim-tau} and Lemma~\ref{lem:ineq1}, it follows that
\begin{equation}
\label{eq:Uj-infty}
\lim_{j\to\infty}D(I(\vr,U^{j},p))=\infty\,.
\end{equation}

The entries $u^j_{\beta\sigma}$ of every $U^{j}$ with respect to any complete orthonormal set are bounded in absolute value by $1$.
Hence, it can be assumed that, for all $\beta,\sigma\in\N^n$, the sequence $(u^j_{\beta\sigma})_{j\geq1}$ has a limit, say, $u^\infty_{\beta\sigma}$. Denote by $U^\infty$ the limit operator $(u^\infty_{\beta\sigma})$, and let $(U^\infty)^*=(\tilde{u}^\infty_{\beta\sigma})$ denote its adjoint.

Let $Y_p$ be the zero-set germ of the ideal $J=(h,f-U^{\infty}(g),(U^{\infty})^*(f)-g)\cdot\OO_p$. The operator norms of $U^\infty$ and of $(U^\infty)^*$ are less than or equal to $1$ (however, $U^\infty$ need not be unitary). Therefore, for every $z$ in a (sufficiently small) representative of $Y_p$, we have
\[
\left\|f(z)\right\|\ =\ \left\|U^{\infty}(g(z))\right\|\ \leq\ \left\|g(z)\right\|\ =\ \left\|(U^{\infty})^*(f(z))\right\|\ \leq\ \left\|f(z)\right\|\,.
\]
Thus $Y_p\subset X_p$, by \eqref{eq:holo-decomp}, and hence $Y_p$ is the germ of the singleton $\{p\}$, by assumption. Consequently $D(J)<\infty$, by Lemma~\ref{lem:ineq1}; say, $D(J)=d$.

Now, by noetherianity of $\OO_p$, there exists $N\in\N$ such that
\[
J=(h,\, f^{\beta}-\sum_\sigma u_{\beta\sigma}g^{\sigma},\, g^{\beta}-\sum_\sigma \tilde{u}_{\beta\sigma}f^{\sigma}\ :\ |\beta|\leq N)\,.
\]
Set
\[
I_j=(h,\, f^{\beta}-\sum_\sigma u^j_{\beta\sigma}g^{\sigma},\, g^{\beta}-\sum_\sigma \tilde{u}^j_{\beta\sigma}f^{\sigma}\ :\ |\beta|\leq N)\,,
\]
where $\tilde{u}^j_{\beta\sigma}$ are the entries of $(U^j)^*$. By the Banach-Steinhaus Theorem, in a sufficiently small neighbourhood of $p$, all $f^{\beta}-\sum_\sigma u^j_{\beta\sigma}g^{\sigma}$ (resp. $g^{\beta}-\sum_\sigma \tilde{u}^j_{\beta\sigma}f^{\sigma}$) converge uniformly to $f^{\beta}-\sum_\sigma u_{\beta\sigma}g^{\sigma}$ (resp. to $g^{\beta}-\sum_\sigma \tilde{u}_{\beta\sigma}f^{\sigma}$) as $j\to\infty$. Hence, by the upper semi-continuity of $D(I)=\dim_{\C}\OO_p/I$ as a function of $I$ (\cite[Ch.\,II, Prop.\,5.3]{tou}),
we have $D(I_j)\leq d$ for $j$ large enough. On the other hand,
\[
I_j\ \subset\ (h,\, f^{\beta}-\sum_\sigma u^j_{\beta\sigma}g^{\sigma},\, g^{\beta}-\sum_\sigma \tilde{u}^j_{\beta\sigma}f^{\sigma}\ :\ \beta\in\N^n)\ =\ I(\vr,U^{j},p)\,,
\]
where the equality follows from \eqref{eq:Uj}. Therefore $D(I(\vr,U^{j},p))\leq D(I_j)\leq d$, which contradicts \eqref{eq:Uj-infty}.
\end{proof}


\end{document}